\colorlet{MyRed}{Crimson!90!Black}
\colorlet{MyBlue}{MediumBlue!90!Black}
\colorlet{MyGreen}{DarkGreen!80!Black}
\pgfplotsset{compat=1.18}
\crefname{algorithm}{Algorithm}{Algorithms}
\crefname{equation}{Eq.}{Eqs.}
\theoremstyle{plain}
\newtheorem{theorem}{Theorem}		
\newtheorem{lemma}{Lemma}		
\newtheorem{proposition}{Proposition}		
\newtheorem*{corollary*}{Corollary}		
\theoremstyle{definition}
\newtheorem{definition}{Definition}		
\newtheorem{assumption}{Assumption}		
\newtheorem{example}{Example}		
\newtheorem*{definition*}{Definition}		
\newtheorem*{assumption*}{Assumptions}		
\newtheorem*{example*}{Example}		
\theoremstyle{remark}
\newtheorem{remark}{Remark}		
\newtheorem*{remark*}{Remark}		
\newcommand{\debug}[1]{#1}		
\newcommand{\scinum}[1]{\num[round-precision=2,round-mode=figures,
     scientific-notation=true]{#1}}
\newcommand{\newmacro}[2]{\newcommand{#1}{\debug{#2}}}		
\newcommand{\newop}[2]{\DeclareMathOperator{#1}{\debug{#2}}}		
\newmacro{\defeq}{\triangleq}
\newmacro{\eqdef}{\triangleq}
\newcommand{\eg}{e.g.,}		
\newcommand{\ie}{i.e.,}		
\newcommand{\textpar}[1]{\textup(#1\textup)}		
\newcommand{\alt}[1]{#1'}		
\newcommand{\bbR}{\mathbb{R}}		
\newcommand{\bbN}{\mathbb{N}}		
\ifdef{\C}{                          
    \renewcommand{\C}{\mathcal{C}}
}{
    \newcommand{\C}{\mathcal{C}}
}
\newcommand{\CS}{\mathcal{S}}
\DeclareMathOperator{\bigoh}{\mathcal O}		
\newmacro{\dd}{\:d}		
\newcommand{\eps}{\varepsilon}		
\renewcommand{\epsilon}{\varepsilon}		
\newop{\dom}{dom}		
\newmacro{\set}{\mathcal{S}}		
\newmacro{\points}{\mathcal{K}}		
\newmacro{\point}{x}		
\newmacro{\pointalt}{\alt\point}		
\newmacro{\dpoints}{\mathcal{Y}}		
\newmacro{\dpoint}{y}		
\newmacro{\dpointalt}{\alt\dpoint}		
\newmacro{\base}{p}		
\newmacro{\basealt}{q}		
\DeclareMathOperator{\diam}{diam}		
\DeclareMathOperator{\dist}{dist}		
\newmacro{\open}{\mathcal{U}}           
\newmacro{\cpt}{\mathcal{K}}            
\newmacro{\U}{U}                        
\newmacro{\start}{1}		
\newmacro{\running}{1,2,\dotsc}		
\newmacro{\run}{t}		
\newmacro{\runalt}{s}		
\newmacro{\nRuns}{T}		
\newmacro{\runtime}{S}		
\newmacro{\runs}{\mathcal{\nRuns}}		
\newmacro{\vecspace}{\mathcal{X}}		
\newmacro{\vdim}{n}		
\newmacro{\vvec}{v}		
\newmacro{\bvec}{e}		
\newmacro{\unitvec}{z}		
\newmacro{\subspace}{\mathcal{W}}		
\newmacro{\subdim}{m}		
\newmacro{\tanspace}{\mathcal{Z}}		
\newmacro{\tvec}{z}		
\newcommand{\norm}[1]{\left\lVert#1\right\rVert}
\newmacro{\dspace}{\vecspace^{\ast}}		
\newmacro{\dvec}{w}		
\newmacro{\dbvec}{\eps}		
\newmacro{\ones}{\mathbf{1}}                     
\newmacro{\mat}{A}                               
\newmacro{\eye}{I}                               
\newop{\Tr}{Tr}
\newmacro{\cvx}{\mathcal{C}}      
\newmacro{\subd}{\partial}        
\newmacro{\strong}{\alpha}        
\newmacro{\smooth}{\beta}         
\newmacro{\obj}{f}		
\newmacro{\objalt}{g}		
\newmacro{\sobj}{F}		
\newmacro{\gvec}{g}		
\newmacro{\gbound}{G}		
\newmacro{\param}{\theta}		
\newmacro{\params}{\Theta}		
\newmacro{\oper}{A}		
\newmacro{\vecfield}{V}		
\newmacro{\vbound}{L}		
\newmacro{\M}{\mathcal{M}}		
\newmacro{\Mcol}{\mathcal{W}}		
\newmacro{\vx}{x}
\newmacro{\vy}{y}
\newmacro{\vxalt}{x'}
\newmacro{\vyalt}{y'}
\newmacro{\vxman}{\vx^{\M}}
\newmacro{\vn}{v_n}
\newmacro{\vnalt}{\hat{v}_n}
\newmacro{\vsg}{v} 
\newmacro{\tangentBundle}{T\mathcal{B}}
\newmacro{\maneq}{h}
\newmacro{\smoothcurve}{  c}
\newmacro{\geocurve}{  \gamma}
\newmacro{\grad}{  \operatorname{grad}}
\newmacro{\Hess}{  \operatorname{Hess}}
\newmacro{\R}{  \operatorname{R}}           
\newmacro{\D}{  \operatorname{D}}           
\newmacro{\Jac}{  \operatorname{Jac}}           
\newcommand{\projM}[1][\M]{\proj_{#1}}
\newmacro{\fun}{  F}   
\newmacro{\funalt}{ \tilde{F}}   
\newmacro{\funman}{  f} 
\newmacro{\funs}{  f}   
\newmacro{\funns}{  g}  
\newmacro{\funcomp}{  F}  
\newmacro{\mapping}{ c}
\newop{\proj}{\pi}
\newop{\prox}{prox}
\newop{\ri}{ri}
\newop{\Conv}{Conv}
\newop{\Aff}{Aff}
\newop{\Par}{Par}
\newop{\Diag}{Diag}
\newop{\diag}{diag}
\newop{\trace}{trace}
\newop{\bnd}{bnd}
\newop{\rbd}{rbd}
\newmacro{\ball}{\mathcal{B}}
\newmacro{\ndim}{n}
\newmacro{\inputSpace}{\bbR^{\ndim}}
\newmacro{\vxinter}{\vy}
\newmacro{\Minter}{\M^{\funns}}
\newmacro{\Mopt}{\opt[\M]}
\newmacro{\Mr}{\M^{\lambda_{\max}}_{\mult}}
\newmacro{\MI}{\M^{\max}_I}
\newmacro{\Mell}{\M_{I}^{\ell_{1}}}
\newmacro{\lammax}{\lambda_{\max{}}}
\newmacro{\mult}{r}
\newcommand{\opt}[1][\vx]{\debug{{#1}^\star}}		
\newmacro{\cm}{ e}
\newmacro{\stepLowBnd}{\varphi}
\newmacro{\stepUpBnd}{\Gamma}
\newmacro{\stepLow}{\underaccent{\bar}{\step}}
\newmacro{\stepUp}{\bar{\step}}
\newmacro{\Mcodim}{p}
\newmacro{\maneqSpace}{\bbR^{\Mcodim}}
\newmacro{\dSQP}{d^{\mathrm{SQP}}}
\newmacro{\currdSQP}{d_{\ite}^{\mathrm{SQP}}(\curr[\vx])}
\newmacro{\dSQPt}{d^{\mathrm{SQP}}_{t}}
\newmacro{\dSQPn}{d^{\mathrm{SQP}}_{n}}
\newmacro{\dSQPr}{d^{\mathrm{SQP}}_{r}}
\newmacro{\gred}{\grad_{\M} \funcomp}
\newmacro{\redg}{\grad_{\M} \funcomp}
\newmacro{\redH}{\Hess_{\M} \funcomp}
\newmacro{\hessLag}{\nabla^2_{xx} L}
\newmacro{\dcorr}{d^{\mathrm{corr}}}
\newmacro{\Lagmult}{\lambda}
\newmacro{\lsstep}{\alpha}
\newmacro{\lsArmijoParam}{m}
\newmacro{\Msqp}{M}
\newmacro{\stepinit}{\step_{\text{init}}}
\newmacro{\minmax}{\Phi}		
\newmacro{\sadobj}{\minmax}		
\newmacro{\minvar}{\point_{1}}		
\newmacro{\minvaralt}{\alt\minvar}		
\newmacro{\minvars}{\points_{1}}		
\newmacro{\maxvar}{\point_{2}}		
\newmacro{\maxvars}{\points_{2}}		
\newmacro{\maxvaralt}{\alt\maxvar}		
\newmacro{\play}{i}		
\newmacro{\playalt}{j}		
\newmacro{\nPlayers}{N}		
\newmacro{\players}{\mathcal{\nPlayers}}		
\newmacro{\pure}{a}		
\newmacro{\purealt}{a'}		
\newmacro{\nPures}{n}		
\newmacro{\pures}{\mathcal{A}}		
\newmacro{\cost}{c}		
\newmacro{\loss}{\ell}		
\newmacro{\pay}{u}		
\newmacro{\payv}{v}		
\newmacro{\pot}{F}		
\newmacro{\game}{\mathcal{G}}		
\newmacro{\gamefull}{\game(\players,\points,\pay)}		
\newmacro{\fingame}{\Gamma}		
\newmacro{\fingamefull}{\Gamma(\players,\pures,\pay)}		
\newmacro{\mixgame}{\simplex(\fingame)}		
\newmacro{\corstrat}{\pi}		
\newmacro{\corprob}{\chi}		
\newmacro{\cormarg}{\point}		
\newmacro{\corprobs}{\points_{\mathrm{c}}}
\DeclareMathOperator{\prob}{\mathbb{P}}		
\DeclareMathOperator{\simplex}{\Delta}		
\newmacro{\sample}{\omega}		
\newmacro{\samples}{\Omega}		
\newmacro{\filter}{\mathcal{F}}		
\newmacro{\probspace}{(\samples,\filter,\prob)}		
\newmacro{\mean}{\mu}		
\newmacro{\sdev}{\sigma}		
\newmacro{\variance}{\sdev^{2}}		
\newmacro{\dkl}{D_{\mathrm{KL}}}		
\newmacro{\as}{\textpar{a.s.}\xspace}		
\newmacro{\hreg}{h}		
\newmacro{\breg}{D}		
\newmacro{\proxmap}{P}		
\newmacro{\mirror}{Q}		
\newmacro{\fench}{F}		
\newmacro{\hstr}{K}		
\newmacro{\depth}{H}		
\newmacro{\radius}{R}		
\newmacro{\zone}{\mathbb{D}}		
\newmacro{\subpoints}{\points^{\circ}}		
\newmacro{\state}{X}		
\newmacro{\dstate}{Y}		
\newmacro{\signal}{V}		
\newmacro{\step}{\gamma}		
\newmacro{\learn}{\eta}		
\newmacro{\ite}{k}
\newmacro{\initite}{0}
\newmacro{\afterinitite}{1}
\newmacro{\sumite}{n}
\newmacro{\Afterite}{K}
\newcommand{\curr}[1][\vx]{\debug{#1_{\ite}}}				
\newmacro{\graph}{\mathcal{G}}
\newmacro{\vertices}{\mathcal{V}}
\newmacro{\edges}{\mathcal{E}}
\newcommand{\Ranexp}{$\bbR_{\text{an,exp.}}$}       
\newmacro{\nPieces}{k}
\newmacro{\polyDeg}{N}
\newmacro{\smoothDeg}{m}
\newmacro{\Cm}{\C^\smoothDeg}
\newmacro{\cancube}{\appDom}
\newmacro{\nslice}{L}
\newmacro{\nCuts}{l}
\newmacro{\appDom}{A} 
\newmacro{\appDomRadius}{r_{\appDom}}
\newcommand{\polySpace}[1][\polyDeg]{\debug{\mathcal{P}}_{#1}}
\newcommand{\piecePolySpaceCuts}[1][\nCuts]{\debug{\mathcal{\widetilde{P}}}_{\polyDeg}^{#1}}
\DeclareMathOperator{\diffSet}{diff}
\newcommand{\diff}[1][\smoothDeg]{\debug{\diffSet}_{#1}}
\newmacro{\cube}{c}
\newmacro{\cubes}{\mathcal{C}_p}
\newmacro{\cubesU}{\mathcal{C}^{U}_p}
\newmacro{\cubesV}{\mathcal{C}^{V}_p}
\newmacro{\const}{C}
\newmacro{\consta}{C'}
\newmacro{\constalt}{C'}
\newmacro{\constb}{C^{\circ}}
\newmacro{\Cnm}{C_{\ndim,\smoothDeg,\cancube}}
\newmacro{\Cjack}{C^{2}_{\ndim,\smoothDeg,\cancube}}
\newmacro{\Cfeff}{C^{1}_{\ndim,\smoothDeg}}
\newmacro{\Cstrat}{C^{3}}
\newmacro{\Cdist}{C^{4}}
\newmacro{\Cerr}{C^{5}}
\newmacro{\Cfbanach}{\bar{C}_f}
\newmacro{\bNode}{\mathcal{T}_{B}}
\newmacro{\lNode}{\mathcal{T}_{L}}
\newmacro{\nSample}{n_{\text{samp}}}
\newmacro{\Mvol}{{v}}
\newmacro{\Mrad}{\mathrm{rad}}
\newmacro{\fCone}{f^\text{cone}}
\newmacro{\sCone}{s^\text{cone}}
\newmacro{\rCone}{r^\text{cone}}
\newmacro{\pmid}{p^\mathrm{mid}}
\newmacro{\rCube}{\diam}
\newmacro{\polyboundary}{p^{boundary}}
\newmacro{\piece}{s}
\newmacro{\epsmarg}{\Cstrat \nCuts^{-\frac{2}{\ndim-1}}}
\newmacro{\Mbig}{\M_0^\piece}
\renewcommand{\resizebox}[3]{#3}
\begin{document}


\title
[Piecewise Polynomial Regression of Tame Functions via Integer Programming]
{Piecewise Polynomial Regression of Tame Functions \\ via Integer Programming}

\author
[G.~Bareilles]
{Gilles Bareilles$^{\dagger}$}
\address{$^{\dagger}$\,%
Faculty of Electrical Engineering, Czech Technical University of Prague, the Czech Republic.}

\author
[J.~Aspman]
{Johannes Aspman$^{\dagger}$}

\author
[J.~N\v{e}me\v{c}ek]
{Ji\v{r}\'{i} N\v{e}me\v{c}ek$^{\dagger}$}

\author
[J.~Mare\v{c}ek]
{Jakub Mare\v{c}ek$^{\dagger}$}



\begin{abstract}
Tame functions are a class of nonsmooth, nonconvex functions,  
which feature in a wide range of applications: functions encountered in the training of deep neural networks with all common activations, 
 value functions of mixed-integer programs, or wave functions of small molecules.
We consider approximating tame functions with piecewise polynomial functions.  
We bound the quality of approximation of a tame function by a piecewise polynomial function with a given number of segments on any full-dimensional cube.
We also present the first mixed-integer programming formulation of piecewise polynomial regression.
Together, these can be used to estimate tame functions.
We demonstrate promising computational results.
\end{abstract}

\maketitle
\allowdisplaybreaks		
\acresetall		



\section{Introduction}
\label{sec:introduction}


In a wide range of applications, one encounters nonsmooth and nonconvex functions that are \emph{tame}, short for \emph{definable in o-minimal structures}.
Such functions can be decomposed into a finite number of regions, where the function is smooth across each region, as illustrated in \cref{fig:NN_intro} (left pane).
Tame functions appear in a broad range of useful and difficult, \ie{} nonsmooth and nonconvex, applications.
Prominent examples are all common (nonsmooth and nonconvex) deep learning architectures \citep{davisStochasticSubgradientMethod2020,bolteMathematicalModelAutomatic2020}, and (nonsmooth) empirical risk minimization frameworks \citep{iutzelerNonsmoothnessMachineLearning2020}.
Tame functions also appear, \eg{} in mixed-integer optimization, with the value function and the solution to the so-called subadditive dual \citep{aspman2023taming}; in quantum information theory, with approximations of the matrix exponential for a k-local Hamiltonian \citep{bondar2022recovering,aravanis2022polynomial}; and in quantum chemistry, with functions describing the electronic structure of molecules.
The tame assumption was key in recent advances in learning theory, with notably the first convergence proofs of Stochastic Gradient Descent  \citep{davisStochasticSubgradientMethod2020,davis2021active,bianchiStochasticSubgradientDescent2021}, or theory of automatic differentiation \citep{bolteMathematicalModelAutomatic2020}.
The tameness property of a function is, among other things, stable under composition. We will discuss these aspects in more detail later.

\begin{figure}
  \begin{center}
      \includegraphics[width=0.6\textwidth]{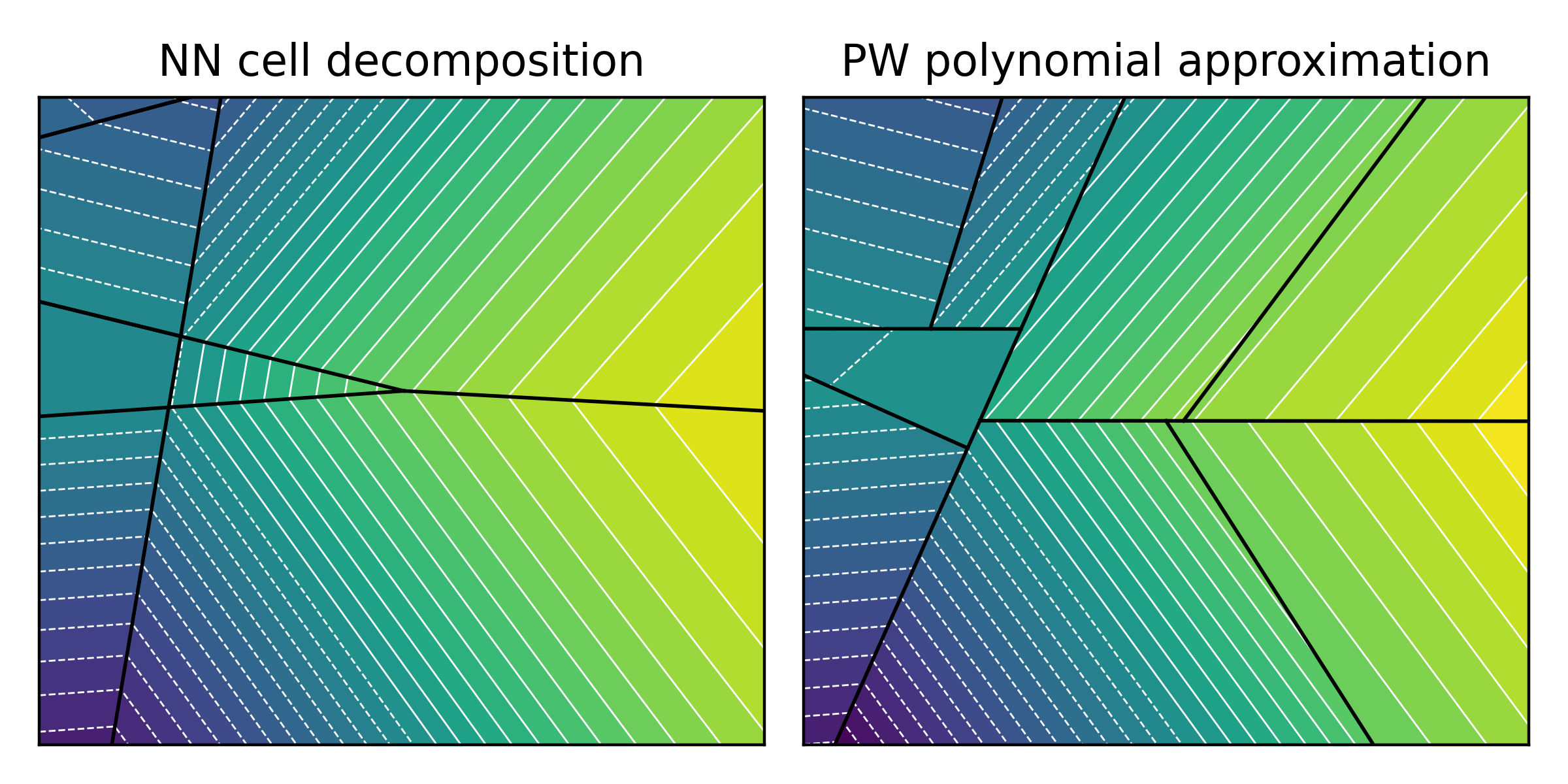}
    \caption{
      Left pane: a generic 2-dimensional function, with level lines (white) and nonsmooth points (black).
      Right pane: piecewise polynomial approximation of the network, obtained from the proposed integer program with a depth 3 regression tree and degree 2 polynomials.
    }
    \label{fig:NN_intro}
  \end{center}
  \vskip -0.2in
\end{figure}

This paper is concerned with building approximation of tame nonsmooth functions, a topic which has received attention recently  \citep{NEURIPS2021_dba4c1a1,chatterjeeAdaptiveEstimationMultivariate2021,donoho1997cart}.
Formally, given a function $f$, we seek a function $p$ in a set of simple functions $\mathcal{P}$ that minimizes the distance from $p$ to $f$ over a domain $A$:
\begin{equation*}
    \inf_{p\in\mathcal{P}} \left( \|f - p\|_{\infty, A} = \sup_{x\in A} |f(x) - p(x)|\right).
\end{equation*}

A major challenge is the nonsmoothness of the function to approximate.
Classical polynomial approximation theory shows that good (``fast'') polynomial approximation of a function is possible if the function has a high degree of regularity.
Specifically, if $f$ is a function $(\smoothDeg+1)$-times continuously differentiable, the best degree $\polyDeg$ polynomial incurs an $\bigoh(1/\polyDeg^{\smoothDeg})$ error \citep{plesniakMultivariateJacksonInequality2009}.
However, the situation changes dramatically when the function has low regularity, \eg{} is continuous but with discontinuous derivatives, as is the case for most settings in learning theory.
Indeed, approximating the absolute value --- the simplest nonsmooth function --- by degree $\polyDeg$ polynomials over the interval $[-1, 1]$ incurs \emph{exactly} the slow rate $1/\polyDeg$:
\begin{equation*}
  \inf_{p \in \mathcal{P}_{\polyDeg}} \|p - |\cdot|\|_{\infty, [-1, 1]} = \frac{\beta}{\polyDeg} + o\left(\frac{1}{\polyDeg}\right),
\end{equation*}
where $\beta\approx 0.28$ \citep{bernsteinMeilleureApproximationPar1914}.
In sharp contrast, allowing an approximation by \emph{piecewise polynomials} makes this problem simpler: the absolute value itself is a piecewise polynomial, consisting of two polynomial pieces of degree 1.
As a second example, consider a neural network comprised of sigmoid, $\tanh$, and ReLU activations.
Its landscape contains nonsmooth points, showed in black in \cref{fig:NN_intro} (left pane).
As the empirical risk minimization is tame, the nonsmooth points delineate regions of space where the function behaves smoothly.
We know of no convergent method for finding a piecewise polynomial approximation of tame functions.

\textbf{Related work.}
While simple, these examples illustrate the two fundamental challenges of estimating nonsmooth functions:
\begin{enumerate}[(i)]
\item estimating the \emph{cells} of the function, that is the full-dimensional sets on which the function is smooth; and, 
\item estimating the function on each cell.
\end{enumerate}
These challenges require input from several branches of mathematics. 


For \emph{(i)}, one can consult Model Theory, and, more specifically, o-minimal structures.  
Let us recall a central theorem there: the graph of any tame function splits into \emph{finitely} many full-dimensional sets, known as ``cells'', on which the function can have any desired degree of smoothness \citep{van1998tame}.
A natural approach is then to estimate this cell decomposition of the space.
\citet{rannouComplexityStratificationComputation1998} reduces this to a quantifier elimination problem and proposes a procedure that takes doubly-exponential time, for semialgebraic functions.
We are not aware of any implementation of this approach.
\citet{helmerConormalSpacesWhitney2023,helmerEffectiveWhitneyStratification2023} tested an implementation for real algebraic varieties and, possibly, semialgebraic sets.
We propose an algorithm that applies to general tame functions, thus covering the semialgebraic case but also networks using \eg{} sigmoid or $\tanh$ activations.

In Machine Learning, 
\citet{serraBoundingCountingLinear2018,pmlr-v97-hanin19a,liu2023relu} study the cells of networks built from ReLU activation and linear layers. They provide bounds on the number of cells, as well as a way to compute the cells of a given network by a mixed-integer linear program.
In contrast, we propose to approximate the tame function by a piecewise polynomial function such that each piece is defined by affine inequalities.
Thus, the whole domain is partitioned by a series of affine-hyperplane cuts, organized in a hierarchical tree structure, and a polynomial function is fit to each region.

To address \emph{(ii)}, Computational Statistics approximate smooth functions by polynomials on polytopes.
There, algorithms are mostly focused on continuous and typically one-dimensional functions \citep{goldbergMINLPFormulationsContinuous2021, warwickerComparisonTwoMixedInteger2021, pwlf, warwickerGeneratingOptimalRobust2023},
and often restricted to approximation of piecewise linear functions, \citep{vielmaMixedIntegerModelsNonseparable2010,KAZDA2021107310, huchetteNonconvexPiecewiseLinear2023}. 
Piecewise polynomial regression with polynomials of degree $\polyDeg \ge 2$ is either not addressed \cite{goldbergMINLPFormulationsContinuous2021,warwickerComparisonTwoMixedInteger2021,warwickerGeneratingOptimalRobust2023} or done through ``dimensionality lifting'' by appending values of all monomials of degrees $2$ to $\polyDeg$ as extra features to the individual samples \cite{pwlf}.
This approach of \citet{pwlf} requires us to know the nonsmooth points in advance.
We make no such assumption.


In Machine Learning, the use of trees befits the task of piecewise polynomial regression, with regression in dimensions higher than 1 utilizing hierarchical partitioning of the input space.
In \citet{NEURIPS2021_dba4c1a1}, the Dyadic CART algorithm of \citet{donoho1997cart} is used to recover a piecewise constant function defined on a lattice of points on the plane, with typical application in image processing and denoising.
This builds upon the work on Optimal Regression Trees (ORT) for classification by \citet{bertsimasOptimalClassificationTrees2017},
and is concerned with the optimal axis-aligned partitioning of the space, resulting in a piecewise constant function.
We stress that \citet{NEURIPS2021_dba4c1a1} only suggest a brute-force computation of the trees for piecewise constant functions. 

In Statistical Theory, \citet{chatterjeeAdaptiveEstimationMultivariate2021} reasons about sample complexity of piecewise polynomial regression via ORT, which also assumes that the data is defined over a lattice and that the splits are axis-aligned, but accommodates fitting polynomials of arbitrary degree and lattices of points in arbitrary dimensions $\ndim \ge 2$.
The lattice data structure assumption facilitates the possibility of using dynamic programming to solve the mixed-integer program, leading to polynomial-time complexity in the number of samples $\nSample$, but has never been demonstrated in practice. 
Two obvious shortcomings of the above approaches are that they require both that the data be defined over a regular lattice and that the splits of the tree are axis-aligned. We know of no proposal to compute optimal regression trees without requiring axis-aligned splits.
This significantly limits the type of piecewise polynomial functions they can reasonably fit, such as the neural network of \cref{fig:NN_intro}, the example function \eqref{eq:cone2dintro} illustrated in \Cref{fig:conelevelsjoint}, or even the simple $\|\cdot\|_{\infty}$ polyhedral norm.



\paragraph{Our contributions.}

In this work, we combine these results from model theory, approximation theory and optimization theory to present:
\begin{itemize}
  \item the first theoretical bound on the approximation error of generic \emph{nonsmooth and nonconvex} tame functions by piecewise polynomial functions, see \cref{th:main};
  \item a procedure to compute the best piecewise polynomial function, where each piece is a polyhedron defined by a number of affine inequalities, and the polynomial on each piece has a given degree.
\end{itemize}
The latter procedure involves sampling the function, and then solving the mixed-integer optimization problem formulated in \cref{eq:OrtFormulationHplane},
    which extends the OCT-H formulation of \citet{bertsimasOptimalClassificationTrees2017} to the regression task. Specifically, we propose a new MIP formulation for finding provably optimal regression trees with arbitrary hyperplane splits and polynomials of arbitrary degree in the partitions. In other words, the method finds optimal piecewise polynomial functions in any dimension and for any degree polynomials, with hierarchical partitioning of the space.
We find that such mixed-integer programs can be solved by current solvers to global optimality with one hundred samples within one hour to global optimality to yield precise piecewise polynomial approximations with affine-hyperplane splits.
   

\Cref{fig:NN_intro} shows the output of the proposed procedure on a tame neural network involving sigmoid, $\tanh$ and ReLU activations.
The left pane shows the level lines, and the nonsmooth points (in black)
that constitute the boundaries of the cells on which the function is smooth.
The right pane shows the approximation found using the formulation proposed in \Cref{sec:mip-form} after one hour of computation.

\textbf{Notation.}
$\C^{\smoothDeg}(A)$ is the set of $\smoothDeg$-times continuously differentiable functions from $A$ to $\bbR$. For a function $f:A\to\bbR$, we define $\diff(f)$ to be the subset of $A$ such that all of the $\smoothDeg$-th order partial derivatives of $f$ exist.
For a real-valued function $f:A\to\bbR$ of a set $A$, we let $\|f\|_{\infty,A} \eqdef \sup_{x\in A} |f(x)|$. 
For any positive integer $n\in\bbN$, $[n]$ denotes the set of integers from 1 to $n$.


\section{Background on tame geometry}
\label{sec:background}

In this section, we outline the main ideas and intuitions of tame geometry, and present the main result of interest: cell decomposition.
A more formal presentation is deferred to \Cref{appx:def}.

\subsection{Definability in o-minimal structures}
\label{sec:defin-o-minim}
An o-minimal structure is a collection of certain subsets of $\bbR^m$ that are stable under a large number of operations. One of the key properties that it should have is that when considering one-dimensional sets, they are only given by \emph{finite} unions of intervals and points. The structure is furthermore closed under Boolean operations, e.g. taking closures, unions or complements, as well as under projections to lower-dimensional sets, and elementary operations such as addition, multiplication and composition.

We use the words \emph{definable} and \emph{tame} interchangeably to refer to sets or functions that belong to a given o-minimal structure. Tame functions are in general nonsmooth and nonconvex, but the tameness properties still make it possible to have some control when studying the behaviour of these functions. The function class is also broad enough to entail most of the cases that would appear in applications in a vast number of fields. For example, in machine learning, most functions that would appear as activation functions in neural networks are tame.

Due to the balance between being wild enough to include a large number of non-trivial applications while still being tame enough such that it is possible to derive qualitative results on their behaviour, the interest in tame functions has seen a recent surge in numerous fields. 
In mathematical optimization, this framework allowed showing results that had proved elusive, notably on the convergence to critical points or escaping of saddle points for various (sub)gradient descent algorithms \citep{davisStochasticSubgradientMethod2020,joszGlobalConvergenceGradient2023,bianchiStochasticSubgradientDescent2021}.


\begin{example}[Analytic-exponential structure]
  Probably the most important o-minimal structure for applications is the one consisting of the analytic-exponential sets, which forms a structure typically denoted \Ranexp.
  An \emph{analytic-exponential set} in $\bbR^{\ndim}$ is defined as a finite union of sets of the form
  \begin{equation*}
          \{x\in \bbR^{\ndim} : \; f_1(x)=0,\dots,f_k(x)=0, g_1(x)>0,\dots, g_l(x)>0\},
  \end{equation*}
  where $k,l$ are finite integers, and $f_i$ and $g_i$ are functions obtained by combining the following:
  \begin{enumerate}[i.]
    \item coordinate functions $x \mapsto x_{i}$ and polynomial/semialgebraic functions,
    \item the restricted analytic functions: the functions $h:\bbR^{\ndim}\to\bbR$ such that $h|_{[-1,1]^{\ndim}}$ is analytic and $h$ is identically zero outside $[-1, 1]^{\ndim}$,
    \item the inverse function $x \mapsto 1/x$, with the convention that $1/0 = 0$,
    \item and the real exponential and logarithm function (the latter is extended to $\bbR$ by setting $\log(x) = 0$ for $x\le 0$). 
  \end{enumerate}

  In particular, any function built from the above rules is definable in the structure \Ranexp.
  Examples include almost all deep learning architectures, but also conic convex functions, wave functions of small molecules, or the following contrived functions  $(x, y) \mapsto x^{2}\exp(-\frac{y^{2}}{x^{4} + y^{2}})$, $(x, y) \mapsto x^{\sqrt{2}}\ln(\sin y)$ for $(x,y)\in(0, \infty)\times (0, \pi)$, or $(x, y) \mapsto y / \sin(x)$ for $x\in(0, \pi)$ \citep[Sec. 1]{kurdykaGradientsFunctionsDefinable1998}.
\end{example}

\subsection{Tame functions are piecewise \texorpdfstring{$\mathcal{C}^\smoothDeg$}{Cr}}
One of the central results in o-minimality theory is the cell decomposition theorem, and the related stratification theorems. These theorems give structural results on the graphs of tame functions by describing how the graph can be partitioned into smaller sets with some control on how the pieces fit together, as well as on the regularity of the function on each piece. In particular, it tells us that we can partition the graph into a finite number of pieces such that the function is $\mathcal{C}^{\smoothDeg}$, for any $\smoothDeg<\infty$, on each piece.  
We introduce the result in a simplified form that best suits our needs. More detailed statements are given in \Cref{appx:def}. 


\begin{proposition}[$\C^{\smoothDeg}$-cell decomposition]\label{prop:celldecomp}
  Fix an o-minimal expansion of $\bbR$.
  Consider a definable full-dimensional set $\appDom\subset\bbR^{\ndim}$ and a definable function $f:A \to \bbR$.
  Then, for any positive integer $\smoothDeg$, there exists a finite collection $\Mcol$ of sets $\M\subset\bbR^{\ndim}$, called cells, such that
  \begin{itemize}
    \item each cell $\M\in\Mcol$ is open, definable, full-dimensional,
    \item the sets of $\Mcol$ are pair-wise disjoint,
    \item $\appDom$ is the union of the closures of the elements of $\Mcol$,
    \item the restriction of $f$ to each cell $\M\subseteq\Mcol$ is $\mathcal{C}^{\smoothDeg}$.
  \end{itemize}
\end{proposition}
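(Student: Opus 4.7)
The plan is to reduce the statement to the classical $\mathcal{C}^{\smoothDeg}$-cell decomposition theorem of o-minimal geometry (see, \eg, \citet{van1998tame} or \citet{costeIntroductionOminimalGeometry2000}), and then to observe that discarding the lower-dimensional cells from the resulting decomposition still recovers $\appDom$ through the closures of the remaining full-dimensional cells.

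First, I would apply the classical $\mathcal{C}^{\smoothDeg}$-cell decomposition theorem to the pair $(\appDom, f)$: this yields a finite partition of $\appDom$ into definable $\mathcal{C}^{\smoothDeg}$-cells $D_{1}, \ldots, D_{N}$ on each of which $f|_{D_{i}}$ is of class $\mathcal{C}^{\smoothDeg}$. I would then set $\Mcol := \{D_{i} : \dim D_{i} = \ndim\}$. By the standard definition of a cell, an $\ndim$-dimensional cell of $\bbR^{\ndim}$ is automatically an open subset, so every element of $\Mcol$ is open, definable, and full-dimensional; pairwise disjointness and the $\mathcal{C}^{\smoothDeg}$-smoothness of $f$ on each $\M\in\Mcol$ are inherited directly from the partition.

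Next I would verify that $\appDom = \bigcup_{\M \in \Mcol} \cl(\M)$. The inclusion $\bigcup_{\M \in \Mcol} \cl(\M) \subseteq \appDom$ holds whenever $\appDom$ is closed (which is the case for the cubic domains targeted in \Cref{sec:appr-defin-funct}), since then $\cl(\M) \subseteq \cl(\appDom) = \appDom$. For the reverse inclusion, I would pick $x \in \appDom$: if $x$ already lies in some $\M \in \Mcol$, we are done; otherwise $x$ sits in a cell $D_{j}$ with $\dim D_{j} < \ndim$. The union $\bigcup_{\dim D_{i} < \ndim} D_{i}$ is a finite union of definable sets of dimension strictly less than $\ndim$, and therefore has $\ndim$-dimensional Lebesgue measure zero by o-minimal dimension theory. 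Since every ball around $x$ meets $\appDom$ in a set of positive $\ndim$-dimensional measure (by the full-dimensionality of $\appDom$ near $x$), each such ball must intersect some $\M \in \Mcol$, and letting its radius vanish yields $x \in \cl(\M)$.

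The main obstacle will be justifying this last dimensional argument at \emph{every} point of $\appDom$: in the intended application $\appDom$ is a cube and each of its points admits arbitrarily small full-dimensional neighborhoods within $\appDom$, but in full generality one must either strengthen ``full-dimensional'' to ``$\appDom = \cl(\intr \appDom)$'' or quote the closed-cell variant of the o-minimal cell decomposition theorem in order to dispose of any dangling lower-dimensional pieces of $\appDom$.
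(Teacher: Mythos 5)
Your proposal is correct and structurally matches the paper's argument, but it routes through a different named theorem: the paper derives the statement in one line from the Whitney stratification theorem for definable maps (Theorem 1.19 of \citet{driesGeometricCategoriesOminimal1996}), taking $\Mcol$ to be the set of full-dimensional strata, whereas you invoke the classical $\mathcal{C}^{\smoothDeg}$-cell decomposition theorem and take the full-dimensional cells. The two are interchangeable here --- both produce a finite definable partition refining $\appDom$ on whose pieces $f$ is $\mathcal{C}^{\smoothDeg}$, and the reduction ``discard the lower-dimensional pieces'' is identical --- so neither buys anything substantive over the other. Where your write-up genuinely adds value is the covering step: the paper asserts without comment that the closures of the full-dimensional pieces recover $\appDom$, while you supply the dimension-theoretic argument (the lower-dimensional cells have measure zero, so every small ball around a point of $\appDom$ must meet a full-dimensional cell) and, importantly, you correctly flag that this step can fail if ``full-dimensional'' is read only as $\dim \appDom = \ndim$ --- a set with an isolated point is a counterexample --- so one should either assume $\appDom = \cl(\intr\appDom)$ or restrict to the cubic domains used in \cref{th:main}, where the issue is vacuous. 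That caveat is a real (if minor) imprecision in the proposition as stated, and your proof handles it more carefully than the paper's does.
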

\begin{proof}
  This proposition is a direct corollary of the cell decomposition theorem and the Whitney stratification theorem of definable maps, recalled in \Cref{appx:def}, with $\Mcol$ corresponding to the set of all full-dimensional strata.
\end{proof}

\begin{figure}[t]
  \begin{center}
    \includegraphics[width=0.4\textwidth]{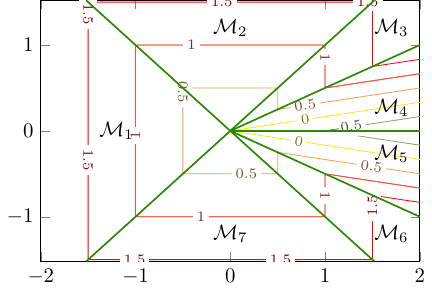}
    \caption{Illustration of the ``cone'' function \eqref{eq:cone2dintro}, with $\sCone=\rCone=0.5$, showing \emph{(i)} the level lines of the function, and \emph{(ii)} the decomposition of the domain into cells on which the function is smooth, as provided by \cref{prop:celldecomp}; see \cref{table:conecells} for details.}
    \label{fig:conelevelsjoint}
  \end{center}
  \vskip -0.2in
\end{figure}

\begin{example}\label{ex:conecelldecomp}
  Consider the following tame (piecewise linear) function of $\bbR^2$:
  \begin{equation}%
    \label{eq:cone2dintro}
      \fCone(x) =
      \begin{cases}
        -\sCone x_{1} + \frac{1+\sCone }{\rCone }x_{2} & \text{ if } x_{1} > 0 \text{ and } 0 < x_{2} < \rCone  x_{1} \\
        -\sCone x_{1} - \frac{1+\sCone }{\rCone }x_{2} & \text{ if } x_{1} > 0 \text{ and } -\rCone x_1 < x_{2} < 0 \\
        \|x\|_{\infty} & \text{ else}
      \end{cases}
  \end{equation}
  When $\rCone=\sCone=0.5$, as guaranteed by \cref{prop:celldecomp}, the domain of the function splits in $7$ cells $(\M_{i})_{i\in[7]}$, on which the function behaves smoothly.
  Here the pieces are polytopes, on which the function is linear.
  \Cref{fig:conelevelsjoint} shows the landscape of the function, and collection of cells $\Mcol = (\M_{i})_{i\in[7]}$.
  In addition, \cref{table:conecells} of \Cref{appx:def} summarizes the analytic expressions of the cells and of the (smooth) restriction of the function on each cell.
\end{example}




\section{Approximation of tame functions}
\label{sec:appr-defin-funct}


In this section, we state our main theoretical result: any tame function can be approximated to an arbitrary precision by a piecewise polynomial function.

\begin{definition}[Polynomial functions]
  We let $\polySpace$ denote the set of polynomials of degree at most $\polyDeg$.  
\end{definition}
\begin{definition}[Piecewise-polynomial functions]\label{def:piecepolycuts}
  We let $\piecePolySpaceCuts(\appDom)$ denote the set of functions that are piecewise polynomial on $\appDom$, such that
  \begin{enumerate}
    \item each piece $\piece$ is a polyhedron defined as the intersection of $\nCuts$ halfspaces, represented as one leaf of a complete binary tree of depth $\nCuts$ where each node collects an affine split of the space,
    \item the restriction of the function to each piece $\piece$ is a polynomial of degree at most $\polyDeg$.
\end{enumerate}
\end{definition}

We are now ready to state our main result, which combines the cell decomposition, \cref{prop:celldecomp}, with a more classical result from smooth function approximation theory.

\begin{restatable}[Main result]{theorem}{mainres}\label{th:main}
  Consider a function $f:\cancube\to\bbR$, and a constant $K>0$ such that:
  \begin{itemize}
    \item $f$ is definable in an o-minimal structure, and
    \item $f$ is $K$-Lipschitz on $\cancube$: for all $x$, $y\in\cancube$, $|f(x) - f(y)| \le K \|x-y\|$.
    \item $\appDom$ is a connected compact subset of $\inputSpace$, such that any two points $x$ and $y$ in $\appDom$ can be joined by a rectifiable arc in $\appDom$ with length no greater that $\sigma\|x-y\|$, where $\sigma$ is a positive constant.
  \end{itemize}
  Then $f$ is piecewise approximable by piecewise polynomial functions (see \cref{def:piecepolycuts}): for any integers $\nCuts\ge 1$, $\polyDeg\ge 1$, and $\smoothDeg > 1$,
  \begin{equation}\label{eq:them}
    \inf_{p \in \piecePolySpaceCuts(\cancube)} \| f -    p\|_{\infty, \appDom} \le C_1 \polyDeg^{-\smoothDeg} + 
C_2 \nCuts^{-\frac{2}{\ndim-1}}.
  \end{equation}
  where $C_1$ depends only on $\ndim$, $\smoothDeg$, $\cancube$, and $f$, and $C_2$ depends only on $\ndim$, $\smoothDeg$, and $f$.
\end{restatable}

\Cref{th:main} is our main approximation result so, before discussing its proof (which we carry out in detail in \Cref{sec:proof-th}), some remarks are in order.

Details of the dependence of the constants $C_1$ and $C_2$ on the problem parameters are given in \Cref{sec:proof-th}. 

The definability assumption ensures that the graph of the function does not oscillate arbitrarily. 
This allows us to use results from \cite{boissonnatTracingIsomanifoldsTime2021}, which do not hold for (not definable) $K$-Lipschitz functions. 


\paragraph{Proof outline of \Cref{th:main}.}
The bound is obtained by considering a piecewise linear approximation of the nonsmooth regions of the function constructed by intersecting $\nCuts$ halfplanes. The proof then splits into two parts, which correspond to the two terms in the bound in \eqref{eq:them}.

Firstly, each polyhedral piece will intersect one `large' (full-dimensional) cell, where the function is $\Cm$. This cell is provided by the cell decomposition theorem.
We can then make use of the Whitney extension theorem together with a Jackson-type theorem to get the first term of \eqref{eq:them}, which matches the fast approximation rates of smooth approximation theory. 

Secondly, a polyhedral piece may also intersect with an additional (or more) cells.
The challenge there is that the two cells are separated by nondifferentiability points, which incur a sharp change in the derivative of the function.
The Jackson-type estimate can no longer provide the fast rate of approximation.
We propose to bound the rate of change using quantitative results on the construction of the piecewise-linear approximation, together with Lipschitz continuity of the function we are approximating.
This allows us to bound the distance between the original function, its smooth extension from the first part, and finally its polynomial approximation, which results in the second term of \eqref{eq:them}.

\section{Mixed-integer formulation of piecewise polynomial approximation}
\label{sec:mip-form}

In this section, we formulate the problem of piecewise polynomial regression as a mixed-integer program (MIP), inspired by the optimal classification trees framework \citep{bertsimasOptimalClassificationTrees2017}.


The mixed-integer optimization problem expects as input $\nSample$ points $(x_{i})_{i\in[\nSample]}$ that belong to $\appDom$, and the corresponding function values $y_{i} = f(x_{i})$ for $i\in[\nSample]$.
Without loss of generality, we assume that the sample points belong to $[0, 1]^{\ndim}$.
The output is a piecewise polynomial function; the boundaries of the smooth pieces are defined by affine hyperplanes.
\Cref{table:hyperparam,table:variablesaffhyp} summarize the hyperparameters and variables of the mixed-integer formulation; we now explain the formulation details.


\textbf{Binary tree.} We consider a fixed binary tree of depth $D$.
The tree has $T = 2^{D+1}-1$ nodes, indexed by $t=1, \ldots, T$ such that all branching nodes are indexed by $t=1, \ldots, 2^{D}-1$ and leaf nodes are indexed by $t=2^{D}, \ldots, 2^{D+1}-1$.
The sets of branching nodes and leaf nodes are denoted $\bNode$ and $\lNode$ respectively.
Besides, the set of ancestors of node $t$ is denoted $A(t)$.
This set is partitioned into $A_{L}(t)$ and $A_{R}(t)$, the subsets of ancestors at which branching was performed on the left and right respectively.
\Cref{fig:tree} shows a tree of depth $D = 2$ where
\eg{} the ancestors of node $6$ are $A(6) = \{1, 3\}$, and left and right branching ancestors are $A_{L}=\{3\}$ and $A_{R}=\{1\}$.
Each leaf corresponds to an element of the partition, defined by the inequalities of its ancestors \eg{} leaf $6$ corresponds to $\{x\in\bbR^{\ndim} : a_{1}^{\top}x \ge b_{1}, a_{3}^{\top}x < b_{3}\}$.

\ifthenelse{\boolean{IsArxivVersion}}{
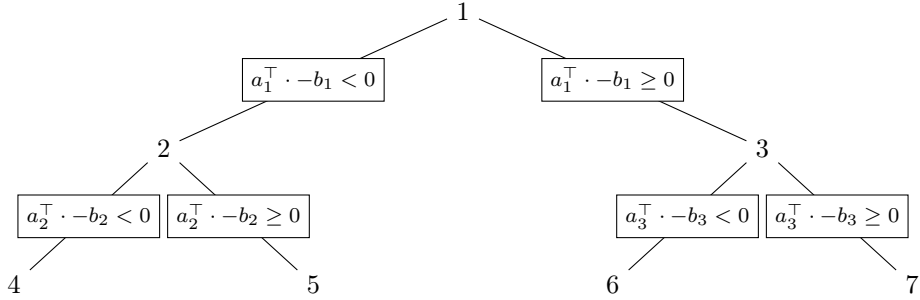
\begin{figure}[t]
        \begin{forest}
          [1 ,for tree={s sep=1.4in,l=12ex},
          [2 ,edge label={node[draw,fill=white,midway]{\footnotesize{$a_{1}^{\top}\cdot - b_{1} < 0$}}}
          [4 ,edge label={node[draw,fill=white,midway] {\footnotesize{$a_{2}^{\top}\cdot - b_{2} < 0$}}}
          ]
          [5 ,edge label={node[draw,fill=white,midway] {\footnotesize{$a_{2}^{\top}\cdot - b_{2} \ge 0$}}}
          ]
          ]
          [3 ,edge label={node[draw,fill=white,midway]{\footnotesize{$a_{1}^{\top}\cdot - b_{1} \ge 0$}}}
          [6 ,edge label={node[draw,fill=white,midway] {\footnotesize{$a_{3}^{\top}\cdot - b_{3} < 0$}}}
          ]
          [7 ,name=P2right,edge label={node[draw,fill=white,midway] {\footnotesize{$a_{3}^{\top}\cdot - b_{3} \ge 0$}}}
          ]
          ]
          ]
        \end{forest}
      \caption{\label{fig:tree}%
        Binary tree and corresponding partition.
  }
\end{figure}
\begin{table}
      \begin{tabular}{ll}
        \toprule
        parameter & interpretation \\ \midrule
        $D$ & depth of the binary tree \\
        $N_{min}$& minimal number of points allowed in a nonempty leaf \\
        $\polyDeg$ & maximum degree of the polynomial on each piece \\
        \bottomrule
      \end{tabular}
      \caption{Summary of the hyperparameters\label{table:hyperparam}}%
\end{table}}{
\begin{figure}[t]
  \begin{floatrow}
    \ffigbox{%
      \resizebox{0.48\textwidth}{!}{%
        \begin{forest}
          [1 ,for tree={s sep=1.4in,l=12ex},
          [2 ,edge label={node[draw,fill=white,midway]{\footnotesize{$a_{1}^{\top}\cdot - b_{1} < 0$}}}
          [4 ,edge label={node[draw,fill=white,midway] {\footnotesize{$a_{2}^{\top}\cdot - b_{2} < 0$}}}
          ]
          [5 ,edge label={node[draw,fill=white,midway] {\footnotesize{$a_{2}^{\top}\cdot - b_{2} \ge 0$}}}
          ]
          ]
          [3 ,edge label={node[draw,fill=white,midway]{\footnotesize{$a_{1}^{\top}\cdot - b_{1} \ge 0$}}}
          [6 ,edge label={node[draw,fill=white,midway] {\footnotesize{$a_{3}^{\top}\cdot - b_{3} < 0$}}}
          ]
          [7 ,name=P2right,edge label={node[draw,fill=white,midway] {\footnotesize{$a_{3}^{\top}\cdot - b_{3} \ge 0$}}}
          ]
          ]
          ]
        \end{forest}
      }
    }{\caption{\label{fig:tree}%
        Binary tree and corresponding partition.
  }}
    \capbtabbox{%
    \resizebox{0.48\textwidth}{!}{%
      \begin{tabular}{ll}
        \toprule
        parameter & interpretation \\ \midrule
        $D$ & depth of the binary tree \\
        $N_{min}$& minimal number of points allowed in a nonempty leaf \\
        $\polyDeg$ & maximum degree of the polynomial on each piece \\
        \bottomrule
      \end{tabular}
      }
    }{%
      \caption{Summary of the hyperparameters\label{table:hyperparam}}%
    }
  \end{floatrow}
\end{figure}
}

\begin{table}[t]
  \caption{Summary of the variables of the affine-hyperplane regression tree formulation\label{table:variablesaffhyp}}
  \begin{center}
  \resizebox{0.6\textwidth}{!}{
    \begin{tabular}{lll}
      \toprule
      variable & index domain & interpretation \\ \midrule
      $l_{t} \in \{0, 1\}$ & $t\in\lNode$ & 1 iff any point is assigned to leaf $t$ \\
      $z_{it} \in \{0, 1\}$ & $t\in\lNode$, $i\in[\nSample]$ & 1 iff point $x_{i}$ is assigned to leaf $t$ \\
      $a_{m} \in \bbR^{\ndim}$ & \multirow{2}{*}{$m\in\bNode$} & \multirow{2}{*}{coefficients of the affine cut} \\
      $b_{m} \in \bbR$ & & \\

      $o_{jm} \in \{0, 1\}$ & $m\in\bNode$, $j\in[\ndim]$ & 1 iff coordinate $j$ of $a_{m}$ is positive \\
      $a_{m}^{+},a_{m}^{-} \in \bbR$ & $m\in\bNode$ & the positive and negative part of $a_{m}$ \\

      $\phi_{it} \in \bbR$ & $t\in\lNode$, $i\in[\nSample]$ & fit error of point $x_{i}$ by the polynomial of leaf $t$ \\
      $\delta_{i} \in \bbR$ & $i\in[\nSample]$ & fit error of point $x_{i}$ by the piecewise polynomial function \\
      $c_{t} \in \bbR^{\binom{\polyDeg + \ndim-1}{\ndim-1}} $ & $t\in\lNode$ & coefficients of the degree $\polyDeg$ polynomial associated with leaf $t$ \\
      \bottomrule
    \end{tabular}
  }
  \end{center}
\end{table}

\textbf{Affine-hyperplane partition of the space.}
At each branching node $m\in\bNode$, a hyperplane splits the space in two subspaces
\begin{equation}\label{eq:splitstrict}
  a_m^{\top} x_i - b_{m}  < 0 \qquad a_m^{\top} x_i - b_{m}  \ge 0,
\end{equation}
that will be associated to the left and right children of node $m$.
The parameters $a_{m}\in\bbR^{\ndim}$ and $b_{m}\in\bbR$ are variables of the mixed integer program.

Making this formulation practical requires two precisions.
First, in order to avoid scaling issues, we constrain $a_{m}$ to belong in $[-1,1]^{\ndim}$, such that $\|a_{m}\|_{1}=1$.\footnote{We depart here from the OCT-H formulation of \cite{bertsimasOptimalClassificationTrees2017}, which constrains the norm of $a_{m}$ to be at most $1$.}
This formulates as
\begin{align*}
  \textstyle\sum_{j=1}^d{(a^+_{jt}+a^-_{jt})} = 1&&  \\
  a_{jt} = a^+_{jt} - a^-_{jt} && \forall j \in [\ndim] \\
  a^+_{jt} \le o_{jt} && \forall j \in [\ndim] \\
  a^-_{jt} \le (1 - o_{jt}) && \forall j \in [\ndim]
\end{align*}
Furthermore, since $x_{i}\in[0, 1]^\ndim$, it holds that $a_{m}^{\top}x_{i}\in[-1, 1]$, so that $b_{m}$ is constrained to $[0, 1]$ without loss of generality.
Second, we implement the strict inequality \eqref{eq:splitstrict} by introducing variable $z_{it}$ 
and a small constant $\mu>0$.
Noting that $a_t^{\top}x_i - b_t$ takes values in the interval $[-2, 2]$, the affine split inequalities \eqref{eq:splitstrict} now formulate as
\begin{align*}
  a_m^{\top} x_i &\ge b_m - 2(1 - z_{it}) & \forall m \in A_R(t)  \\
  a_m^{\top} x_i + \mu &\le b_m + (2 + \mu)(1 - z_{it}) & \forall m \in A_L(t)  
\end{align*}
both for all $i$ in $[\nSample]$ and all $t$ in $\mathcal{T}_L$.

\textbf{Regression on each partition element.}
Each leaf $t\in\lNode$ corresponds to one element of the partition of $\appDom$ defined by the tree, and is associated with a degree $r$ polynomial $\textrm{poly}(\cdot; c_{t})$ whose coefficients are parameterized by variable $c_{t}$.
The regression error of point $x_{i}$ by the polynomial associated with leaf $t$ is then $\phi_{it} = y_i - \textrm{poly}(x_{i}; c_{t})$.
Each point $x_{i}$ is assigned to a unique leaf of the tree: for all $i\in[\nSample]$, $\sum_{t \in \lNode}{z_{it}} = 1$.
Furthermore, each leaf $t\in\lNode$ is either assigned zero or at least $N_{min}$ points: for all $i\in[\nSample]$, $\sum_{i=1}^{\nSample}{z_{it}} \ge N_{\min} l_t$ and $z_{it} \le l_{t}$.

\textbf{Minimizing the regression error.}
The objective is minimizing the average prediction error $\sum_{i=1}^{\nSample}|y_{i} - \textrm{poly}(x_{i}; c_{t(i)})|$, where $c_{t(i)}$ is the polynomial coefficients corresponding to the leaf to which $x_{i}$ belongs.
We formulate this as minimizing $\nSample^{-1} \sum_{i=1}^{\nSample} \delta_{i}$,
where $\delta_{i}$ models the absolute value of the prediction error for point $x_{i}$:
\begin{align*}
  \delta_{i} &\ge \phantom{-}\phi_{it} - (1 - z_{it})M \quad \text{for all } t \in \lNode \\
  \delta_{i} &\ge -\phi_{it} - (1 - z_{it})M \quad \text{for all } t \in \lNode
\end{align*}
where $M$ is a big constant that makes the constraint inactive when $z_{it} = 0$.
Note that the above big-$M$ formulation can be replaced by indicator constraints, if the solver supports them, to encode that if $z_{it} = 1$, then $\delta_i \ge \phi_{it}$,  $\delta_i \ge -\phi_{it}$.
We further note that the formulation could be changed to optimize the mean squared error, by changing the objective function to $\nSample^{-1}\sum_{i=1}^{\nSample} \delta_i^2$.


Combining these elements yields the affine hyperplane formulation, summarized in \cref{eq:OrtFormulationHplane}, \cref{appx:affhyp}.

\begin{remark}[Axis-aligned regression]
  In \Cref{appx:axisalign}, we propose a version of \eqref{eq:OrtFormulationHplane} tailored to functions whose cells have boundaries aligned with cartesian axes.
  Such functions appear in signal processing applications, and are the topic of recent works \citet{chatterjeeAdaptiveEstimationMultivariate2021,bertsimasOptimalClassificationTrees2017,NEURIPS2021_dba4c1a1}.
\end{remark}

\section{Numerical experiments}
\label{sec:numer-exper}

In this section, we demonstrate the applicability of the piecewise polynomial regression method developed in \cref{sec:mip-form}.
\cref{sec:addit-deta-num,sec:numericalexperiments} contains complementary details and experiments.

\paragraph{Setup.}
We implement the affine-hyperplane formulation, detailed in \cref{eq:OrtFormulationHplane}, in Python and use Gurobi as the MIP solver.
We present three applications: regression of the cone function \eqref{eq:cone2dintro}, regression of a Neural Network, and denoising of a piecewise constant 2d signal, following the experiments of \citep{NEURIPS2021_dba4c1a1}.
We use trees of varying depth $D \in \{2,3,4\}$ and polynomial degree $\polyDeg \in \{0,1,2\}$.
We set $N_{\min}=1$ and $\mu=10^{-4}$ across all experiments, and run experiments on a personal laptop.


\subsection{Regression of tame functions}
\label{sec:regr-tame-funct}

We consider the regression problem for two tame functions and report in \cref{fig:regreexpes} the level lines of the original function and the level lines of the piecewise polynomial approximations provided by the affine-hyperplane formulation \eqref{eq:OrtFormulationHplane}.
\Cref{table:expsregression} shows the error computed on a $1000 \times 1000$ grid of regularly spaced points. We divide the absolute errors by the maximal absolute value of the underlying ground truth.



\paragraph{Piecewise-affine function.}
Firstly, we return to the ``cone'' function. 
This function is piecewise-linear and continuous; see \cref{fig:cone} (left pane) for an illustration.
This is a nonconvex function for which estimating the correct full-dimensional cells with a sampling scheme gets harder as $\rCone$ goes to zero or as the dimension of the space $\ndim$ increases.

\Cref{fig:cone} (middle and right pane) shows the obtained piecewise approximation of $\fCone$ (with $\rCone=\sCone=0.5$), for depth $2$ and $3$ approximation trees with time budget of 5 and 10 minutes.
There are $\nSample = 250$ points sampled uniformly; the polynomial degree is $\polyDeg=1$.
The depth $D=2$ tree fails to recover an approximation of the cells, as it can only approximate $4$ cells while $\fCone$ has $7$ cells.
The depth $D = 3$ recovers the cell decomposition well qualitatively, and reduces the error measures by a factor $2$; see \cref{table:expsregression}.

\begin{figure}[t]
  \centering
  \begin{subfigure}{0.48\columnwidth}
    \includegraphics[width=\textwidth]{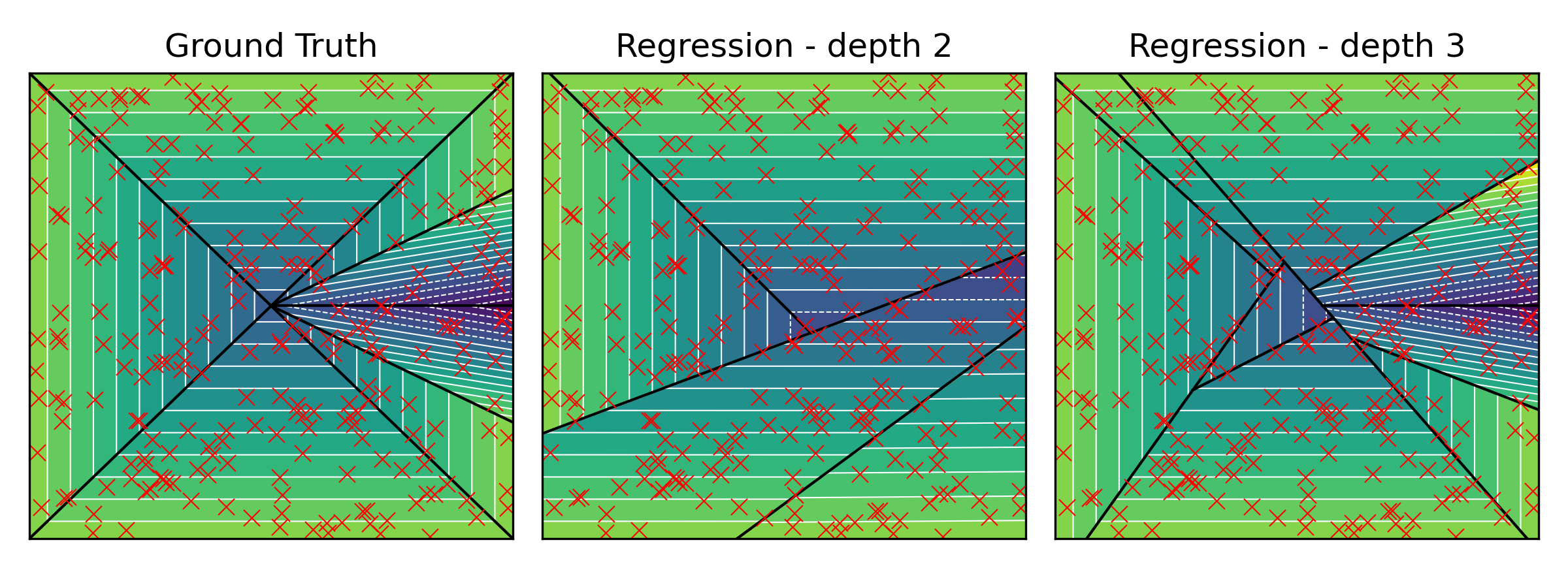}
    \caption{The cone function \eqref{eq:cone2dintro}.}
    \label{fig:cone}
  \end{subfigure}
  \begin{subfigure}{0.48\columnwidth}
    \includegraphics[width=\textwidth]{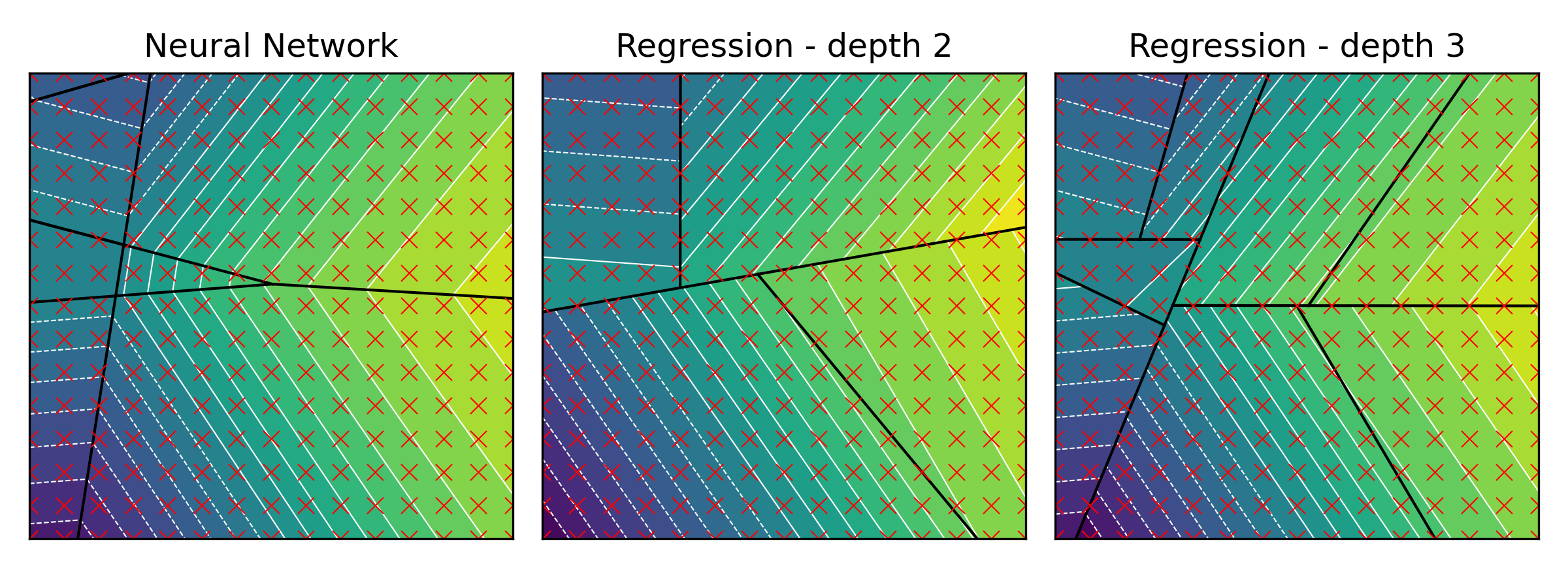}
    \caption{The neural network from \cref{fig:NN_architecture}}
    \label{fig:NN_fit}
  \end{subfigure}
  \caption{Tame functions and regression results for trees of depth 2 and 3. Red crosses are the coordinates of samples used for the training. Black lines show the decomposition of the space.\label{fig:regreexpes}}
\end{figure}



\paragraph{Non-piecewise-linear Neural Network}

We consider a small neural network, comprised of 27 parameters, sigmoid, $\tanh$, ReLU activations, and 1d max-pooling; see the architecture in \Cref{fig:NN_architecture}.

The neural network is trained to approximate the 2d function $x\mapsto 2\sin{x_1} + 2\cos{x_2} + x_2 / 2$, from 15 random samples on the cube $[-2,2]^{2}$.
The loss is the mean squared error over the 15 samples; it is optimized in a single batch for 5000 epochs using the \texttt{AdamW} algorithm.


We approximate the output of the trained NN by taking $\nSample = 225 = 15^{2}$ points in a $15\times15$ regular grid over the approximation space and optimize the trees using that as input.
We set the polynomial degree $\polyDeg=2$.


\Cref{fig:NN_fit} (middle and right pane) shows the obtained piecewise approximation of the Neural Network, for depth $2$ and $3$ approximation trees with time budget of 30 and 60 minutes. 
Increasing the depth from 2 to 3 reduces again the error measures by a factor of 2; see \Cref{table:expsregression}. 

\subsection{Denoising of a piecewise-constant 2d signal}
\label{sec:deno-piec-const}

\begin{figure}[t]
  \centering
  \includegraphics[width=0.5\columnwidth]{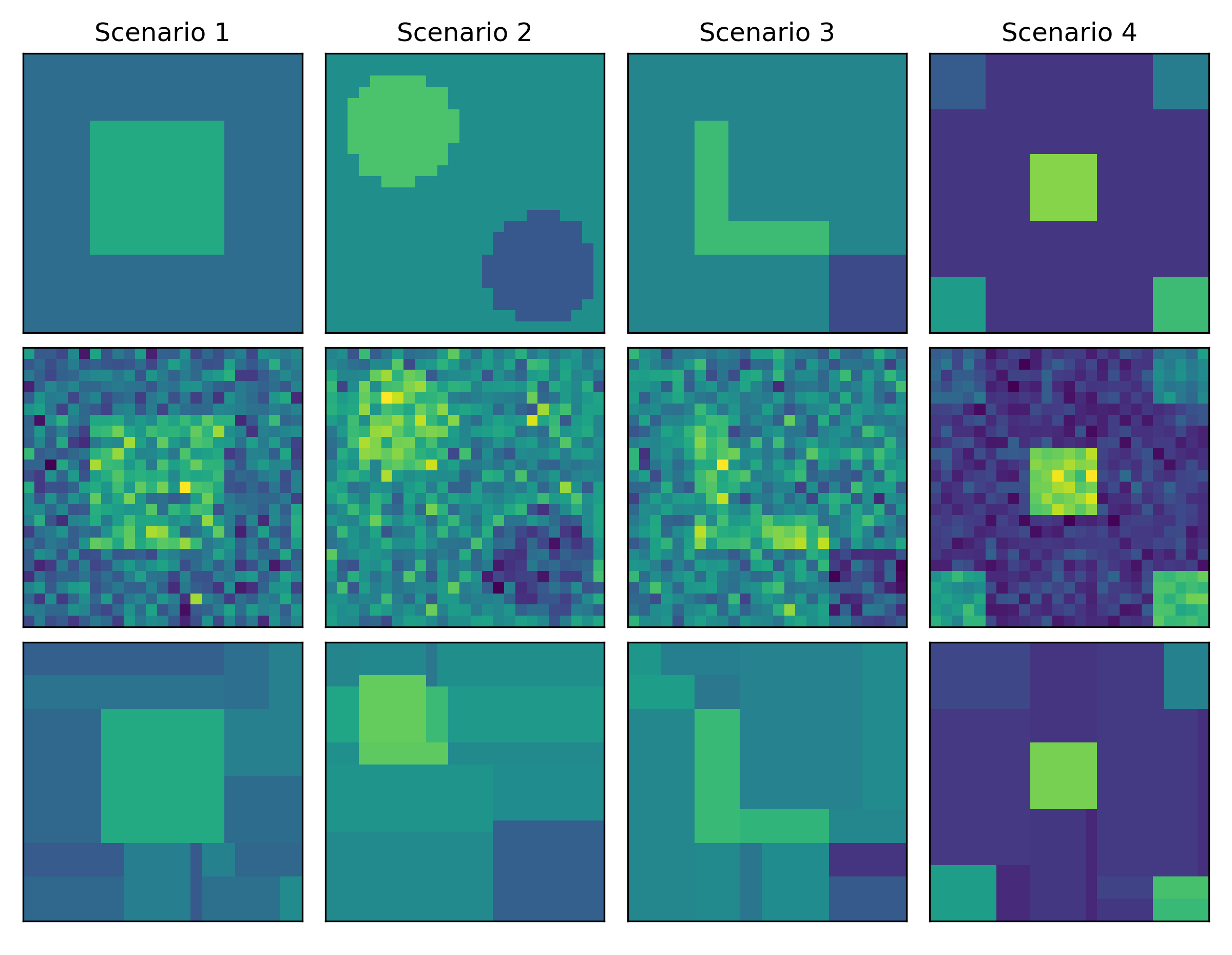}
  \caption{
    Four denoising scenarios from \citet[Figure 3]{NEURIPS2021_dba4c1a1}.
    The regression trees are restricted to axis-aligned splits and have depth 4.
    First row: ground truth, second row: (corrupted) regression signal, third row: recovered signal from the mixed integer formulation.}
  \label{fig:data_noisy}
\end{figure}

We consider now a slightly different set-up: following \citet{NEURIPS2021_dba4c1a1}, we consider four two-dimensional functions that are piecewise constant on a 25 by 25 grid, illustrated in \Cref{fig:data_noisy} (row one).
The regression data is the function corrupted by an additive Gaussian with zero mean and standard deviation of $\sigma = 0.5$, illustrated in \Cref{fig:data_noisy} (row two).
Row three of \Cref{fig:data_noisy} shows, for each scenario, the output of regression trees with depth $D=4$ and polynomial degree $\polyDeg=0$, with a time limit of 1 hour.

The piecewise polynomial formulation used here is a simplified version of \Cref{eq:OrtFormulationHplane}, where the splits are restricted to align with the Cartesian axes; see \cref{appx:axisalign} for details.
\Cref{table:expsdenoising} shows the recovery error of the regression trees; the recovered signals are comparable in quality to Figure 3 in \citet{NEURIPS2021_dba4c1a1}.

\subsection{Notes on the optimization process}
\label{sec:note-optim-process}
To study the MIP optimization process until completion, we choose the $\norm{\cdot}_{\infty}$ norm.
This is a simple example of a tame function with partitions unfit for approximation by trees with axis-aligned splits, as in \eg{} the recent works of \citet{NEURIPS2021_dba4c1a1,chatterjeeAdaptiveEstimationMultivariate2021}.

We optimize over $\nSample=100$ randomly sampled points with depth $D=2$ and polynomial degree $\polyDeg=1$. 
In the Appendix,  \Cref{fig:mip_process} shows the evolution of objective value bounds: the blue curve shows the objective value of the best candidate found by the solver so far, the orange curve shows the best lower bound on the optimal objective value found so far.
A candidate piecewise polynomial function is proved to be optimal when the two curves coincide.
Here, the mixed-integer solver takes almost 50 minutes to find a candidate piecewise polynomial function \emph{and} prove its optimality.
But already after 11 seconds, the candidate function has an objective value within $10^{-6}$ of the optimal objective value.
Most of the effort in solving \eqref{eq:OrtFormulationHplane} is thus spent on proving optimality of the candidate function.
This is an even more extreme contrast between the optimization of the two bounds as compared to the results for OCT \cite{bertsimasOptimalClassificationTrees2017}.
This suggests that although the rest of our results were not proven optimal, the solutions could be close to optimal after a short computing time.

\section{Conclusions and limitations}

Our numerical results showcase a proof-of-concept implementation, whose scalability (in terms of global optimality) is limited to low-dimensional functions, while a feasible solution can be found surprisingly quickly. Cf. \Cref{fig:mip_process} in the Appendix.  
Improvements in the search for lower (dual) bounds is an important direction of future work.



\appendix
\setcounter{remark}{0}
\numberwithin{equation}{section}		
\numberwithin{lemma}{section}		
\numberwithin{proposition}{section}		
\numberwithin{theorem}{section}		
\numberwithin{corollary}{section}		

\section{Tame geometry}
\label{appx:def}

We give a more formal but brief review of the results needed from tame, or o-minimal, geometry.
We refer the interested reader to \citet{van1998tame,costeIntroductionOminimalGeometry2000} for extensive expositions.

Let us start with the definition of an o-minimal structure.
For simplicity, we consider only structures over the real field $\bbR$.
\begin{definition}[o-minimal structure]\label{def:omin}
  An o-minimal structure on $\bbR$ is a sequence $\CS=(\CS_m)_{m\in\bbN}$ such that for each $m\geq 1$:
  \begin{enumerate}[i.]
    \item $\CS_m$ is a boolean algebra of subsets of $\bbR^m$;
    \item if $A\in\CS_m$, then $\bbR\times A$ and $A\times \bbR$ belongs to $\CS_{m+1}$;
    \item $\{(x_1,\dots,x_m)\in \bbR^m\,:\, x_1=x_m\}\in \CS_m$;
    \item if $A\in\CS_{m+1}$, and $\pi:\,\bbR^{m+1}\to\bbR^m$ is the projection map on the first $m$ coordinates, then $\pi(A)\in\CS_m$;
    \item the sets in $\CS_1$ are exactly the finite unions of intervals and points.
  \end{enumerate}
\end{definition}

A set $A\subseteq \bbR^m$ is said to be \emph{definable} in $\CS$ if $A$ belongs to $\CS_m$.
Similarly, a map $f:\, A\to B$, with $A\subseteq \bbR^m$, $B\subseteq \bbR^n$, is said to be definable in $\CS$ if its graph $\Gamma(f) \eqdef \{(x,f(x))\in\bbR^{m+n}: x\in A\}$ belongs to $\CS_{m+n}$.

A set or function definable in some o-minimal structure is often simply referred to as \emph{tame} when the specific o-minimal structure is not important.

The most fundamental o-minimal structure is that containing the semialgebraic sets:
\begin{example}[Semialgebraic structure]
  A \emph{semialgebraic set} in $\bbR^{\ndim}$ defines as a finite union of sets of the form
  \begin{equation}\label{eq:semi_alg}
    \{ x \in \bbR^{\ndim} : f_1(x) = 0,\ldots, f_k(x)=0, g_{1}(x) > 0, \ldots, g_{l}(x) > 0\}
  \end{equation}
  where the $f_i$ and the $g_i$ are all real polynomials in $\ndim$ variables.

  The collection of all semialgebraic sets forms an o-minimal structure. In particular, any function built from polynomials, boolean rules, and coordinate projections is definable in that structure. Examples include affine applications, piecewise polynomial functions such as linear layers, (pointwise) ReLU activation functions, and their composition, or functions such as $(x, y) \mapsto \sqrt{x^{2} + y^{3}}$.
\end{example}

As stated in \cref{prop:celldecomp}, tame sets can always be partitioned into smaller subsets.
For completeness, we state the formal cell decomposition theorem.
To this end, we first need the following.

\begin{definition}[Cells, decompositions]\label{def:celldecomp}
  We define both cells and decompositions inductively. A \emph{cell} in $\bbR$ is a point $\{a\}$, for $a\in\bbR$ or an (open) interval $(a,b)$, for $a,b\in\bbR\cup\{\pm\infty\}$.
  \begin{itemize}
    \item Let $C$ be a cell in $\bbR^m$ and let $f:\, C\to\bbR$ be tame and continuous, then $\{(x,f(x))\,:\,x\in C\}$ is a cell in $\bbR^{m+1}$.
    \item Let $C$ be a cell in $\bbR^m$ and let $f,g:\, C\to\bbR\cup\{\pm\infty\}$ be tame and continuous such that for all $x\in C:\,f(x)<g(x)$, then $\{(x,y)\in C\times \bbR\,:\, f(x)<y<g(x)\}$ is a cell in $\bbR^{m+1}$.
  \end{itemize}

  A \emph{decomposition} of $\bbR$ is a finite partition of the form
  \begin{equation}
    \{(-\infty,a_1),(a_1,a_2),\dots, (a_n,+\infty),\{a_1\},\dots,\{a_n\}\},
  \end{equation}
  and a decomposition of $\bbR^{m+1}$ is a finite partition of $\bbR^{m+1}$ into cells $C_1,\dots,C_n$ such that the set of projections $\pi(C_j)$ gives a partition of $\bbR^m$.
\end{definition}
Cells are connected sets.
We further say that a cell is $\mathcal{C}^\smoothDeg$ if all functions used to construct the cell are $\mathcal{C}^\smoothDeg$. We now have the fundamental result:

\begin{theorem}[Cell decomposition, cf. Thm. 7.3.2 \cite{van1998tame}]
For any definable sets $A,\, A_1,\dots, A_n\subseteq \bbR^m$ and definable function $f:A\to\bbR$, there is
    \begin{itemize}
        \item a decomposition of $\bbR^m$ into $\C^\smoothDeg$-cells partitioning the sets $A_j$.
        \item a decomposition of $\bbR^m$ into $\C^\smoothDeg$-cells partitioning $A$, such that each restriction $f|_C:C\to\bbR$ is $\C^\smoothDeg$ for each cell $C\subseteq A$ of the decomposition.
    \end{itemize}
\end{theorem}

\begin{example}[\cref{ex:conecelldecomp} continued]
  We return on the cone function, defined in \cref{eq:cone2dintro} as
  \begin{equation}
    f(x) =
    \begin{cases}
      -\sCone x_{1} + \frac{1+\sCone}{\rCone}x_{2} & \text{ if } x_{1} > 0 \text{ and } 0 < x_{2} < \rCone x_{1} \\
      -\sCone x_{1} - \frac{1+\sCone}{\rCone}x_{2} & \text{ if } x_{1} > 0 \text{ and } -\rCone x_1 < x_{2} < 0 \\
      \|x\|_{\infty} & \text{ else}
    \end{cases}.
  \end{equation}
  The above cell decomposition theorem provides the cells described in \cref{table:conecells}; see \cref{fig:conelevelsjointappx} for an illustration.
\end{example}

\begin{figure}[t]
  \begin{center}
    \includegraphics[width=0.48\textwidth]{figures/figab.pdf}
    \caption{Illustration of the ``cone'' function \eqref{eq:cone2dintro}, with $\sCone=\rCone=0.5$, showing \emph{(i)} the level lines of the function, and \emph{(ii)} the decomposition of the domain into cells on which the function is smooth, as provided by \cref{prop:celldecomp}; see \cref{table:conecells} for details.}
    \label{fig:conelevelsjointappx}
  \end{center}
  \vskip -0.2in
\end{figure}
\begin{table}[t]
  \caption{Cell decomposition of the ``cone'' function \eqref{eq:cone2dintro} with $\rCone=\sCone=0.5$: the space decomposes in $7$ open full-dimensional sets on which the function is smooth; see \cref{fig:conelevelsjoint} for an illustration.\label{table:conecells}}
  \centering
  \begin{tabular}{lll}
    \toprule
    Cell & cell expression & $f$ restricted to cell \\ \midrule
    $\M_{1}$ & $\{x\in\bbR^{2}: x_{1}+x_{2}<0, \; x_{1}-x_{2}<0\}$ & $f|_{\M_{1}} = -x_{1}$ \\
    $\M_{2}$ & $\{x\in\bbR^{2}: x_{1}+x_{2}<0, \; x_{1}-x_{2}<0\}$ & $f|_{\M_{2}} = \phantom{-}x_{2}$ \\
    $\M_{3}$ & $\{x\in\bbR^{2}: x_{1}+x_{2}>0, \; x_{1}-x_{2}>0\}$ & $f|_{\M_{3}} = \phantom{-}x_{1}$ \\
    $\M_{4}$ & $\{x\in\bbR^{2}: x_{1}>0, \; 0 < \phantom{-}x_{2}< 0.5x_{1}\}$ & $f|_{\M_{4}} = -0.5x_{1} + 3x_{2}$ \\
    $\M_{5}$ & $\{x\in\bbR^{2}: x_{1}>0, \; 0 < -x_{2}< 0.5x_{1}\}$ & $f|_{\M_{5}} = -0.5x_{1} - 3x_{2}$ \\
    $\M_{6}$ & $\{x\in\bbR^{2}: x_{1}+x_{2}>0, \; x_{1}-x_{2}>0\}$ & $f|_{\M_{6}} = \phantom{-}x_{1}$ \\
    $\M_{7}$ & $\{x\in\bbR^{2}: x_{1}+x_{2}<0, \; x_{1}-x_{2}<0\}$ & $f|_{\M_{7}} = -x_{2}$ \\
    \bottomrule
  \end{tabular}
\end{table}

A related notion to that of cell decomposition is that of stratifications. A stratification is slightly stronger than a cell decomposition, in that it gives further conditions on how the different pieces fit together. The basic idea is that the set is partitioned into a finite number of manifolds, called the \emph{strata}, with some additional conditions on how the different strata glue together. Various types of stratifications, with differing conditions on the gluing of the strata, exist in the literature. Some examples are the Whitney, Verdier and Lipschitz conditions; see \eg{} \citet{le1998verdier}. For o-minimal structures, we can again require that the function is $\C^\smoothDeg$, for some $\smoothDeg<\infty$, on each strata. The additional gluing conditions have played a vital role in the recent optimization literature when addressing \eg{} questions of convergence for gradient descent algorithms to Clarke critical points \cite{davisStochasticSubgradientMethod2020, bolte2021conservative, davis2021active}.



\section{Proof of \cref{th:main}}
\label{sec:proof-th}
In this Section, we provide the proof of \cref{th:main}.

\paragraph{Notation}
We follow the standard notion of the differential $\D^{\gamma} f$ of a function $f:\inputSpace\to\bbR$, for a multi-index $\gamma\in\inputSpace$.

Following the setup of \cite{feffermanExtensionLinearOperators2007}, we denote by $\Cm(\inputSpace)$ the space of real-valued functions on $\inputSpace$ with continuous and bounded derivatives through order $\smoothDeg$; and
\begin{equation}
  \|F\|_{\Cm(\inputSpace)} = \max_{|\gamma|\le m} \sup_{x\in\bbR^{n}} |\D^{\gamma} F(x)|.
\end{equation}
Furthermore, for a subset $E$ of $\inputSpace$, we denote by $\Cm(E)$ the Banach space of all real-valued functions $\varphi$ on $E$ such that $\varphi=F$ on $E$ for some $F\in\Cm(\inputSpace)$.
The natural norm on $\Cm(E)$ is given by
\begin{align}
  \|F\|_{\Cm(E)} &= \inf_{\varphi\in\Cm(\inputSpace) \text{ and } \varphi = F \text{ on } E} \|\varphi\|_{\Cm(\inputSpace)}.
\end{align}
We collect here a simple remark:
\begin{lemma}\label{lmm:banachnorm}
  Consider two subsets $E$ and $F$ of $\inputSpace$.
  If $E\subset F$, then for any $f\in\Cm(F)$, there holds
  \begin{equation}
    \|f\|_{\Cm(E)} \le \| f \|_{\Cm(F)}.
  \end{equation}
\end{lemma}
\begin{proof}
The property follows from the fact that, since $E\subset F$, the set of functions $\varphi\in\Cm(\inputSpace)$ such that $\varphi=f$ on $F$ is contained in the set of functions $\varphi\in\Cm(\inputSpace)$ such that $\varphi = f$ on $E$.
\end{proof}

Finally, we need the following notion of distance between a pair of manifolds:
\begin{definition}
    The Frechet distance $d_F(\cdot,\cdot)$ between two homeomorphic manifolds is defined as
    \begin{equation*}
        d_F(\mathcal{M}_a,\mathcal{M}_b)=\inf_{h\in \mathcal{H}}\sup_{x\in\mathcal{M}_a}d(x,h(x)),
    \end{equation*}
    where $\mathcal{H}$ is the set of all homemorphisms from $\mathcal{M}_a$ to $\mathcal{M}_b$. 
\end{definition}

\subsection{A Jackson and extension theorem for subsets of $\cancube$}
We provide a variant of the Jackson theorem that allows for fast approximation of smooth functions.
The main difference relative to classical statements \eg{} \cite{plesniakMultivariateJacksonInequality2009,bagbyMultivariateSimultaneousApproximation2002}, is that the constant $\Cnm$ does not depend on the domain $E\subset\cancube$ where the function is approximated, only on the working domain $\cancube$.

\begin{lemma}\label{lmm:Jackson}
  Fix $\ndim$ and $\smoothDeg$ positive integers and assume that $\appDom$ is a connected compact subset of $\inputSpace$, such that any two points $x$ and $y$ in $\appDom$ can be joined by a rectifiable arc in $\appDom$ with length no greater that $\sigma\|x-y\|$, where $\sigma$ is a positive constant.
  
  Then, there exists a constant $\Cfeff$ that depends only on $\ndim$ and $\smoothDeg$, and a constant $\Cnm$ that depends only on $\ndim$, $\smoothDeg$, and $\cancube$ such that the following holds: for any subset $E$ of $\cancube$,
  \begin{enumerate}[i.]
      \item there exists a linear operator $T:\Cm(E)\to\Cm(\inputSpace)$ such that $T$ extends functions of $E$ to $\inputSpace$, and the operator norm of $T$ is bounded by $\Cfeff$, so that for any $f\in\Cm(E)$,
          \begin{equation}\label{eq:Feffext}
            \|T(f)\|_{\Cm(\inputSpace)} \le \Cfeff \|f\|_{\Cm(E)}.
          \end{equation}
      \item for any function $f\in\C^{\smoothDeg}(E)$, and integer $\polyDeg$, there exists a polynomial $p_{\polyDeg}$ of degree at most $\polyDeg$ such that
  \begin{equation}
    \| f - p_{\polyDeg}\|_{\infty, E} \le \Cnm \frac{1}{\polyDeg^{\smoothDeg}} \|f\|_{\Cm(E)}.
  \end{equation}
  \end{enumerate}
\end{lemma}

\begin{proof}
  The first item is exactly \Citet[Th. 1]{feffermanExtensionLinearOperators2007}.
  For the second item, applying Jackson's theorem \cite[Th. 2]{bagbyMultivariateSimultaneousApproximation2002} to $T(f)$ on $\cancube$ provides the existence of a polynomial $p_{\polyDeg}$ of degree up to $\polyDeg$ such that:
  \begin{equation}\label{eq:Jackson}
    \|T(f) - p_{\polyDeg}\|_{\infty, \cancube} \le \Cjack \frac{1}{\polyDeg^{\smoothDeg}} \sum_{|\gamma|\le \smoothDeg}|\D^{\gamma} T(f)|_{\infty, \cancube}.
  \end{equation}
  Note that the assumptions of Theorem 2 \cite{bagbyMultivariateSimultaneousApproximation2002} are verified: $T(f)$ is indeed of class $\C^{\smoothDeg}$ on a neighborhood of $\cancube$.

Finally, note that
  \begin{multline}\label{eq:normbound}
    \sum_{|\gamma|\le \smoothDeg}|\D^{\gamma} T(f)|_{\infty, \cancube} \le \sum_{|\gamma|\le \smoothDeg}|\D^{\gamma} T(f)|_{\infty, \inputSpace}  \\ \le \binom{\smoothDeg+\ndim}{\ndim} \max_{|\gamma|\le \smoothDeg}|\D^{\gamma} T(f)|_{\infty, \inputSpace} = \binom{\smoothDeg+\ndim}{\ndim} \| T(f) \|_{\Cm(\inputSpace)}.
  \end{multline}

  The above elements combine as follows,
  \begin{align*}
    \| f - p_{\polyDeg}\|_{\infty, E}
    &= \| T(f) - p_{\polyDeg}\|_{\infty, E} \\
    &\le \| T(f) - p_{\polyDeg}\|_{\infty, \cancube} \\
    &\overset{\eqref{eq:Jackson}}{\le} \Cjack \frac{1}{\polyDeg^{\smoothDeg}} \sum_{|\gamma|\le \smoothDeg}|\D^{\gamma} T(f)|_{\infty, \cancube} \\
    &\overset{\eqref{eq:normbound}}{\le} \Cjack \binom{\smoothDeg+\ndim}{\ndim} \frac{1}{\polyDeg^{\smoothDeg}} \|T(f)\|_{\Cm(\inputSpace)}  \\
    &\overset{\eqref{eq:Feffext}}{\le} \Cjack \binom{\smoothDeg+\ndim}{\ndim}  \Cfeff \frac{1}{\polyDeg^{\smoothDeg}} \|f\|_{\Cm(E)},
  \end{align*}
  which shows the claim.
\end{proof}

\subsection{Piecewise-linear approximation of the cell decomposition}

\begin{assumption}[On the cell-decomposition]\label{assump:stratification}
    We consider a cell decomposition $\Mcol$, as per \cref{def:celldecomp}, such that each cell $\M$ is a manifold with boundaries. This is typically referred to as a stratification, see \cref{appx:def}. The stratification $\Mcol$ of $\cancube$ is such that for every codimension $1$ strata $\M$ there exists an application $\Phi_{\M}:\inputSpace\to\bbR$ such that:
    \begin{itemize}
        \item zero is a regular point of $\Phi_\M$, and the zero-set of $\Phi_\M$ defines $\M$: $\M = \Phi_\M^{-1}(\{0\})$; 
        \item $\Phi_\M$ is $\C^2(\cancube)$ and its gradient and Hessian are non-zero and bounded away from infinity.
    \end{itemize}
\end{assumption}

\begin{proposition}\label{prop:strattobintree}
  Consider a stratification $\Mcol$ definable in an o-minimal expansion of $\bbR$ that meets \cref{assump:stratification}. There exists a binary tree partition of $\cancube$ with depth $\nCuts$, as defined in \cref{def:piecepolycuts}, such that the distance between each codimension 1 strata, $\M$, and the corresponding partition boundary, $\widehat{\M}$, is bounded by 
  \begin{equation}
      d_F(\M, \widehat{\M}) \le \Cstrat \nCuts^{-\frac{2}{\ndim-1}},
  \end{equation}
  where the constant $\Cstrat$ depends only on the dimension $\ndim$ and the geometry of the strata $\M$. 
\end{proposition}
\begin{proof}
We apply the algorithm of \cite{boissonnatTracingIsomanifoldsTime2021} to the closure of the strata $\M$.
From \cite[Thm. 3.4]{boissonnatTracingIsomanifoldsTime2021} we have that $\dist(\M,\widehat{\M}) \le \Cdist D^2 $, where $D$ is the maximal diameter of a linear piece. Here the constant depends on the magnitude of the gradient and Hessian of the mapping $\Phi_\M$ from Assumption \ref{assump:stratification}. From \cite[Prop. 3.6]{boissonnatTracingIsomanifoldsTime2021} we have $\nCuts\leq \Cerr D^{-(\ndim-1)}$, where now the constant $\Cerr$ depends on the space dimension $\ndim$, and the number of times any straight line intersects $\M$. Note that this number is finite by the definability assumption, as is well-known in the case of algebraic manifolds. Putting this together gives the result. 
\end{proof}


\subsection{Proof of the main result}

We are now ready to prove the main result \cref{th:main}, which we restate here for convenience.
\mainres*


\begin{proof}
  The proof consists in constructing a piecewise polynomial function in $\piecePolySpaceCuts$ that has the claimed distance to $f$.

  We first construct the depth-$\nCuts$ binary tree that defines the pieces of the piecewise polynomial.
  Since $f$ is definable in an o-minimal structure, \cref{prop:celldecomp} yields a $\Cm$-decomposition of $\cancube$ such that
  \begin{itemize}
      \item each cell is a connected $\Cm$-manifold,
      \item the restriction of $f$ to each cell $\M$ is $\Cm(\M)$.
  \end{itemize}
  If the decomposition meets \cref{assump:stratification}, \cref{prop:strattobintree} provides a way to recursively split the space along affine hyperplanes such that, for any cell, or stratum, $\M$ that is not full-dimensional and any point $x$ in $\M$, there exists a point of the boundary between the pieces that is at most $\epsmarg$ away from $x$.
  This specifies exactly the geometry of the pieces of the piecewise polynomial function we construct.

  We now turn to define the polynomial on an arbitrary piece $\piece$ on which the piecewise polynomial function is a polynomial.
  Note that by construction of the geometry of the pieces, there can be at most one (full-dimensional) cell such that there exists a point both in $\piece$ and the cell that are more that $\varepsilon$ away from the boundaries of the cell, that is, the points where $f$ is not $\Cm$.
  We let $\Mbig$ denote such a cell, if it exists, or otherwise, an arbitrary cell that intersects with the piece.

  Consider a point $x$ in $\piece$.
  \Cref{lmm:Jackson} provides \emph{(i)} $T(f)$, a smooth extension of $f$ from $\piece\cap\Mbig$ to a neighborhood of $\cancube$, the norm of which is bounded by $\Cfeff$ according to \eqref{eq:Feffext}, and \emph{(ii)} $p_{\polyDeg}^\piece$, the polynomial of degree at most $\polyDeg$ such that, for all $x\in \cancube$,
  \begin{equation}\label{eq:bounda}
    |T(f)(x) - p^{\piece}_\polyDeg(x)| \le \Cnm \frac{1}{\polyDeg^{\smoothDeg}} \|f\|_{\Cm(\piece\cap \Mbig)}.
  \end{equation}

  The triangular inequality yields
  \begin{equation}\label{eq:trianga}
      |f(x) - p^{\piece}_\polyDeg(x)| \le |f(x) - T(f)(x)| + |T(f)(x) - p_\polyDeg(x)|.
  \end{equation}
  The second term of \eqref{eq:trianga} is readily bounded by \eqref{eq:bounda}.

  We now turn to bound the first term of \eqref{eq:trianga}.
  We have, by the triangular inequality
  \begin{multline}
      |f(x) - T(f)(x)| \le |f(x) - f(\projM(x))| \\+ |f(\projM(x))-T(f)(\projM(x))| + |T(f)(\projM(x)) - T(f)(x)|.
  \end{multline}
  First, since $f$ is $K$-Lipschitz,
  \begin{equation}\label{eq:boundb}
    |f(x) - f(\projM(x))| \le K \|x - \projM(x)\|.
  \end{equation}
  Second, since $T(f)$ is an extension of $f$ on $\piece\cap\Mbig$, and $\projM(x)$ belongs to the closure of that space, there holds $f(\projM(x))=T(f)(\projM(x))$.
  Third, since $T(f)$ is $\smoothDeg$-times continuously differentiable on $\inputSpace$,
  \begin{equation}\label{eq:boundc}
    |T(f)(\projM(x)) - T(f)(x)| \le \sup_{y\in\inputSpace} \|\nabla T(f)(y)\| \|x - \projM(x)\|.
  \end{equation}
  Combining the inequality $\|\cdot\|_{2} \le \ndim\|\cdot\|_{\infty}$ and the definition of the norm on the Banach space $\Cm(\inputSpace)$,
  \begin{equation}\label{eq:boundd}
    \sup_{y\in\inputSpace} \|\nabla T(f)(y)\| \le  \ndim \|T(f)\|_{\Cm(\inputSpace)} |.
  \end{equation}

  Combining the above bounds \cref{eq:bounda,eq:boundb,eq:boundc,eq:boundd} yields
  \begin{equation}
      |f(x) - p_{\polyDeg}^\piece(x)| \le (K + \ndim\|T(f)\|_{\Cm(\inputSpace)}) \|x - \projM(x)\| + \Cnm \frac{1}{\polyDeg^{\smoothDeg}} \|f\|_{\Cm(\piece\cap \Mbig)}.
  \end{equation}

  Now, since $\|x-\projM(x)\| \le \epsmarg$ for $x\in\piece$, and using the bound on the norm of $T$ \eqref{eq:Feffext} yields, for all $x\in\piece$,
  \begin{equation}\label{eq:abovething}
      |f(x) - p_{\polyDeg}^\piece(x)| \le (K + \ndim \Cfeff \|f\|_{\Cm(\piece\cap\Mbig)}) \epsmarg + \Cnm \frac{1}{\polyDeg^{\smoothDeg}} \|f\|_{\Cm(\piece\cap \Mbig)}.
  \end{equation}
  We can now use \cref{lmm:banachnorm} to deduce that
  \begin{equation}
    \|f\|_{\Cm(\piece\cap\Mbig)} \le \|f\|_{\Cm(\Mbig)} \le \max_{\M\in\Mcol \text{ such that } \dim(\M) = \ndim} \|f\|_{\Cm(\M)} \eqdef  \Cfbanach,
  \end{equation}
  so that the bound of \eqref{eq:abovething} is now independent of $\piece$:
  \begin{equation}
    |f(x) - p_{\polyDeg}^\piece(x)| \le (K + \ndim \Cfeff \Cfbanach) \epsmarg + \Cnm \frac{1}{\polyDeg^{\smoothDeg}} \Cfbanach.
  \end{equation}

  Taking the supremum over all $x$ in $\piece$, and then over all pieces $\piece$ of $\appDom$ yields the bound for the piecewise polynomial function $p$
  \begin{equation}
      \| f - p \|_{\cancube, \infty} \le C_1 \polyDeg^{-\smoothDeg} + C_2 \nCuts^{-\frac{2}{\ndim-1}}.
  \end{equation}
  where $C_1 = \Cnm \Cfbanach$ depends only on $\ndim$, $\smoothDeg$, $\cancube$, and $f$, and $C_2 =(K + \ndim \Cfeff \Cfbanach) \Cstrat$ depends only on $\ndim$, $\smoothDeg$, and $f$.
\end{proof}




\section{Affine-hyperplane regression formulation: complete formulation}
\label{appx:affhyp}
In this section, we provide the complete affine-hyperplane regression formulation, introduced in \Cref{sec:mip-form}.

\begin{subequations}
  \label{eq:OrtFormulationHplane}
  \begin{align}
    \min \; & \frac{1}{\nSample} \sum^{\nSample}_{i=1} \delta_i & \\
    \textrm{s.t.} \quad & \delta_i \ge \phi_{it} - (1 - z_{it})M && \forall i \in [\nSample], \quad \forall t \in \mathcal{T}_L \\
            & \delta_i \ge - \phi_{it} - (1 - z_{it})M && \forall i \in [\nSample], \quad \forall t \in \mathcal{T}_L \\
            & \phi_{it} = y_i - \textrm{poly}(x_i; c_t) && \forall i \in [\nSample], \quad \forall t \in \mathcal{T}_L \\
            & a_m^{\top} x_i \ge b_m - 2(1 - z_{it}) && \forall i \in [\nSample], \quad \forall t \in \mathcal{T}_L, \quad \forall m \in A_R(t) \label{eq:eee}\\
            & a_m^{\top} x_i + \mu \le b_m & & \notag \\
            & \qquad + (2 + \mu)(1 - z_{it})& & \forall i \in [\nSample], \quad \forall t \in \mathcal{T}_L, \quad \forall m \in A_L(t) \label{eq:fff}\\
            & \textstyle\sum_{t \in \mathcal{T}_L}{z_{it}} = 1 && \forall i \in [\nSample]  \label{eq:ggg}\\
            & z_{it} \le l_t && \forall i \in [\nSample], \quad \forall t \in \mathcal{T}_L \label{eq:hhh}\\
            & \textstyle\sum_{i=1}^{\nSample}{z_{it}} \ge N_{\min} l_t && \forall t \in \mathcal{T}_L \label{eq:iii}\\
      & \textstyle\sum_{j=1}^d{(a^+_{jt}+a^-_{jt})} = 1 && \forall t \in \mathcal{T}_B \\
      & a_{jt} = a^+_{jt} - a^-_{jt} && \forall j \in [d], \quad \forall t \in \mathcal{T}_B \\
      & a^+_{jt} \le o_{jt} && \forall j \in [d], \quad \forall t \in \mathcal{T}_B \\
      & a^-_{jt} \le (1 - o_{jt}) && \forall j \in [d], \quad \forall t \in \mathcal{T}_B \\
            & z_{it}, l_{t} \in \{0, 1\} && \forall i \in [\nSample], \quad \forall t \in \mathcal{T}_L \\
            & o_{jt} \in \{0, 1\} && \forall j \in [\ndim], \quad \forall t \in \mathcal{T}_B \\
            & a_{jt}, b_{t} \in [-1, 1] && \forall j \in [\ndim], \quad \forall t \in \mathcal{T}_B \\
            & a_{jt}^{+}, a_{jt}^{-} \in [0, 1] && \forall j \in [\ndim], \quad \forall t \in \mathcal{T}_B \\
            & \phi_{it} \in \mathbb{R} && \forall i \in [\nSample], \quad \forall t \in \mathcal{T}_L
  \end{align}
\end{subequations}
The constraints \cref{eq:eee,eq:fff,eq:ggg,eq:hhh,eq:iii} are the optimal classification tree with hyperplanes (OCT-H) of \citet{bertsimasOptimalClassificationTrees2017}; the other constraints are our extension thereof.


\section{Axis-aligned regression formulation}
\label{appx:axisalign}
In this section, we introduce a variant of the affine hyperplane regression tree, presented in \cref{sec:mip-form}, that accommodates using hyperplanes aligned with cartesian for partitioning the space.
\Cref{table:hyperparam,table:variablesaffhyp} summarize the hyperparameters and variables of the mixed-integer formulation.

\textbf{Axis-aligned partition of the space.} At each branching node $m\in\bNode$, a hyperplane splits the space in two subspaces
\begin{equation}\label{eq:splitstrictappx}
  a_m^{\top} x_i - b_{m}  < 0 \qquad a_m^{\top} x_i - b_{m}  \ge 0,
\end{equation}
that will be associated to the left and right children of node $m$.
The parameters $a_{m}\in \{0,1\}^{\ndim}$ and $b_{m} \in [0,1]$ are variables of the mixed integer program.
In the axis-aligned formulation, the hyperplane is aligned with an axis of the cartesian space.
This is enforced by requiring for all branching node $m\in\bNode$ that $a_{m}$ take boolean values, only one of which is one:
\begin{equation}
  \textstyle\sum_{j=1}^d{a_{jm}} = 1, \qquad 0 \le b_m \le 1.
\end{equation}
Since $x_{i}\in[0, 1]$, there holds $a_{m}^{\top}x_{i}\in[0, 1]$, so that $b_{m}$ is constrained to $[0, 1]$ without loss of generality.

In order to model the strict inequality in \eqref{eq:splitstrictappx}, we follow \citet{bertsimasOptimalClassificationTrees2017} and introduce the vector $\epsilon\in\bbR^{\ndim}$ of smallest increments between two distinct consecutive values in points $(x_{i})_{i=[\nSample]}$ in any dimension:
\begin{equation}
  \epsilon_j = \min \left\{x_j^{(i+1)} - x_j^{(i)}, \text{ for } i \in [\ndim-1] \;:\; x_j^{(i+1)} \ne x_j^{(i)}\right\}
\end{equation}
where $x_j^{(i)}$ is the $i$-th largest value in the $j$-th dimension.
$\epsilon_{\max}$ is the highest value of $\epsilon_j$ and serves as a tight big-M bound, leading to the formulation
\begin{align}
a_m^{\top} x_i &\ge b_m - (1 - z_{it}) & \forall m \in A_R(t) \\ 
{a}_m^{\top} ({x}_i + {\epsilon}) &\le b_m + (1 + \epsilon_{\max})(1 - z_{it}) & \forall m \in A_L(t) 
\end{align}
both for all $i$ in $[\nSample]$ and all $t$ in $\mathcal{T}_L$. Recall that $z_{it}$ takes binary values and is equal to one if sample $x_i$ belongs to leaf node $t$. 

Combining these elements yields the axis-aligned formulation:
\begin{subequations}
  \label{eq:OrtFormulation}
  \begin{align}
    \min \; &
              \frac{1}{\nSample} \sum^{\nSample}_{i=1} \delta_i & \\
    \textrm{s.t.} \quad & \delta_i \ge \phi_{it} - (1 - z_{it})M && \forall i \in [\nSample], \quad \forall t \in \mathcal{T}_L \\
            & \delta_i \ge - \phi_{it} - (1 - z_{it})M && \forall i \in [\nSample], \quad \forall t \in \mathcal{T}_L \\
            & \phi_{it} = y_i - \textrm{poly}(x_i; c_t) && \forall i \in [\nSample], \quad \forall t \in \mathcal{T}_L \\
            & a_m^{\top} {x}_i \ge b_m - (1 - z_{it}) && \forall i \in [\nSample], \quad \forall t \in \mathcal{T}_L, \quad \forall m \in A_R(t)    \label{eq:axaligned_eee}\\
            & {a}_m^{\top} ({x}_i + {\epsilon}) \le b_m & & \notag\\
            & \quad + (1 + \epsilon_{\max})(1 - z_{it})& & \forall i \in [\nSample], \quad \forall t \in \mathcal{T}_L, \quad \forall m \in A_L(t)   \label{eq:axaligned_fff}\\
            & \textstyle\sum_{t \in \mathcal{T}_L}{z_{it}} = 1 && \forall i \in [\nSample]  \label{eq:aaa}\\
            & z_{it} \le l_t && \forall i \in [\nSample], \quad \forall t \in \mathcal{T}_L \label{eq:bbb}\\
            & \textstyle\sum_{i=1}^{\nSample}{z_{it}} \ge N_{\min} l_t && \forall t \in \mathcal{T}_L \label{eq:minN}\\
            & \textstyle\sum_{j=1}^d{a_{jt}} = 1 && \forall t \in \mathcal{T}_B \label{eq:ccc}\\
            & 0 \le b_t \le 1 && \forall t \in \mathcal{T}_B \label{eq:ddd}\\
            & z_{it}, l_{t} \in \{0, 1\} && \forall i \in [\nSample], \quad \forall t \in \mathcal{T}_L \label{eq:axaligned_lll}\\
            & a_{jt} \in \{0, 1\} && \forall j \in [\ndim], \quad \forall t \in \mathcal{T}_B \label{eq:axaligned_mmm}\\
            & \phi_{it} \in \mathbb{R} && \forall i \in [\nSample], \quad \forall t \in \mathcal{T}_L
  \end{align}
\end{subequations}
The constraints \cref{eq:axaligned_eee,eq:axaligned_fff,eq:aaa,eq:bbb,eq:minN,eq:ccc,eq:ddd,eq:axaligned_lll,eq:axaligned_mmm} are the optimal classification trees (OCT) of \citet{bertsimasOptimalClassificationTrees2017}, whereas the other constraints are our extensions thereof. Note that we do not use the complexity parameters of the OCT formulation ($d_t$) and replace them with 1, where appropriate.

\begin{table}[t]
  \caption{Summary of the variables of the axis-aligned regression tree formulation\label{table:variablesaxisaligned}}
  \centering
  \begin{tabular}{lll}
    \toprule
    variable & index domain & interpretation \\ \midrule
    $l_{t} \in \{0, 1\}$ & $t\in\lNode$ & 1 iff any point is assigned to leaf $t$ \\
    $z_{it} \in \{0, 1\}$ & $t\in\lNode$, $i\in[\nSample]$ & 1 iff point $x_{i}$ is assigned to leaf $t$ \\
    $a_{m} \in \{0, 1\}^{\ndim}$ & \multirow{2}{*}{$m\in\bNode$} & \multirow{2}{*}{coefficients of the axis-aligned cut} \\
    $b_{m} \in \bbR$ & & \\
    $\phi_{it} \in \bbR$ & $t\in\lNode$, $i\in[\nSample]$ & fit error of point $x_{i}$ by the polynomial of leaf $t$ \\
    $\delta_{i} \in \bbR$ & $i\in[\nSample]$ & fit error of point $x_{i}$ by the piecewise polynomial function \\
    $c_{t} \in \bbR^{\binom{\polyDeg + \ndim}{\ndim}} $ & $t\in\lNode$ & coefficients of the degree $\polyDeg$ polynomial associated with leaf $t$ \\
    \bottomrule
  \end{tabular}
\end{table}

\section{Additional details on \cref{sec:numer-exper}}
\label{sec:addit-deta-num}
In this section, we provide additional figures and details on the numerical experiments presented in \Cref{sec:numer-exper}.

\Cref{fig:NN_architecture} displays the architecture of the Neural Network used in \cref{sec:regr-tame-funct}.

\begin{figure}[t]
  \centering
  \includegraphics[width=0.6\columnwidth]{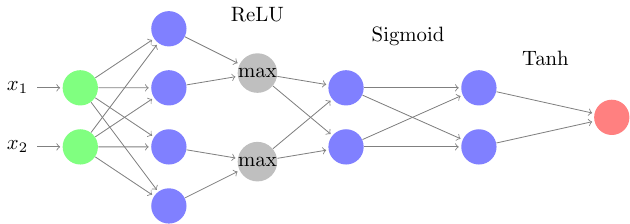}
  \caption{%
    The architecture of the example Neural Network. Input neurons are green, blue nodes are neurons in hidden layers, and the output neuron is red.
    Grey nodes represent the max-pooling layer.
    Above the connections are the names of activation functions used on the outputs of the layer to the left of each respective name.
  }
  \label{fig:NN_architecture}
\end{figure}

\Cref{fig:mip_process} shows the evolution of the optimization routine, as described in \cref{sec:note-optim-process}.

\begin{figure}[t]
    \centering
      \includegraphics[width=0.6\columnwidth]{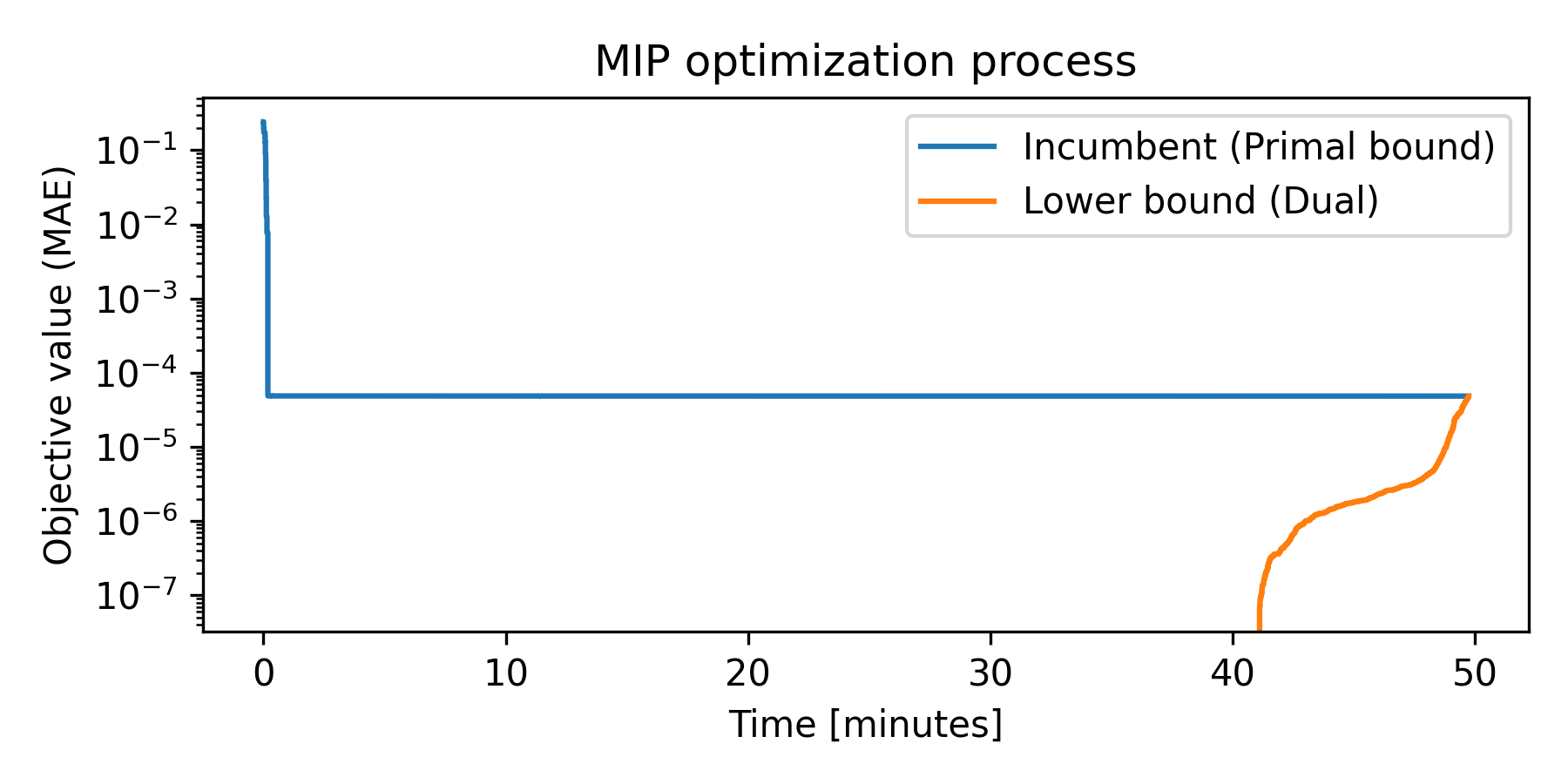}
    \caption{Objective value (mean absolute error) of the incumbent solution during the optimization. The full optimization takes 50 minutes, although a solution with almost optimal error is found within seconds.}
    \label{fig:mip_process}
\end{figure}

\begin{table}[t]
  \caption{%
    \label{table:expsregression}
    Normalized error between tame functions and affine-hyperplane tree approximations \eqref{eq:OrtFormulationHplane}.
  }
  \begin{center}
    \resizebox{0.48\textwidth}{!}{
      \begin{tabular}{lcccc}
        \toprule
        Function & depth & max. err. & mean err. & median err. \\ \midrule
        \multirow{2}{*}{$\fCone$ \eqref{eq:cone2dintro}} & 2 & \scinum{0.49887991165379475} & \scinum{0.04685379257004139} & \scinum{0.00010017119702482091} \\
                 & 3 & \scinum{0.3742223619441276} & \scinum{0.010007973010744676} & \scinum{6.371677832554213e-05} \\ 
        \midrule
        \multirow{2}{*}{NN} & 2
                         & \scinum{0.32716195149752386} & \scinum{0.036286603222585916} & \scinum{0.024595572191527453} \\
                 & 3
                         & \scinum{0.1886663085338612} & \scinum{0.012912554084793467} & \scinum{0.009264882675604167} \\
        \bottomrule
      \end{tabular}
    }
    \vskip -0.15in
  \end{center}
\end{table}

\begin{table}[p]
  \centering
  \caption{
    Normalized absolute error for the denoising scenarios in \Cref{fig:data_noisy}. The error is computed as the absolute difference between the ground truth signal and its approximation by the \emph{axis-aligned} trees, divided by the maximal value of the ground truth.
    \label{table:expsdenoising}
  }
  \resizebox{0.48\textwidth}{!}{
  \begin{tabular}{lcccccc}
    \toprule
       Denoising & max. err. & mean err. & median err. \\ \midrule
    Scenario 1 & \scinum{1.0975286704379505} & \scinum{0.14299021402955037} & \scinum{0.09257820874197198} \\
    Scenario 2 & \scinum{1.4545038074634236} & \scinum{0.18311634112276917} & \scinum{0.09003334246760586} \\
    Scenario 3 & \scinum{0.9546384046329512} & \scinum{0.11935686801291823} & \scinum{0.054727166940183425} \\
    Scenario 4 & \scinum{0.8339429435772073} & \scinum{0.04129052618009546} & \scinum{0.01588706426515545} \\
    \bottomrule
  \end{tabular}
  }
\end{table}


\section{Complementary experimental results}
\label{sec:numericalexperiments}
In this section, we give complementary experiments that illustrate the practical behavior of the axis-aligned and affine-hyperplane regression models.

\paragraph{Setup.}
The setup is identical to the one described in \cref{sec:numer-exper}.
We consider three additional regression problems, for which we plot the landscapes in the forthcoming figures and give approximation errors in \cref{table:expsappx}.




\paragraph{Piecewise-linear norms.}
\label{sec:piec-line-norms}

We consider two simple piecewise linear test functions: the $\|\cdot\|_{1}$ and $\|\cdot\|_{\infty}$ norms, defined for $x\in\bbR^{\ndim}$ by
\begin{equation}
  \|x\|_{1} = \sum_{i=1}^{\ndim} |x_{i}|, \qquad \|x\|_{\infty} = \max_{i\in [\ndim]} |x_{i}|.
\end{equation}
For both functions, we set depth $D=2$ and polynomial degree $\polyDeg=1$. We sample $\nSample=250$ points in the approximation space. We test both the axis-aligned \eqref{eq:OrtFormulation} and the general affine-hyperplane \eqref{eq:OrtFormulationHplane} formulations with a time limit of 5 minutes for each optimization.
For both norms, the axis-aligned formulation was solved to optimality and the affine-hyperplane formulation timed out.

\Cref{fig:l1norm} shows the results on the $\|\cdot\|_{1}$ norm. 
Note that the full-dimensional cells of the $\|\cdot\|_{1}$ are axis aligned, so the axis-aligned formulation \eqref{eq:OrtFormulation} (with $D=2$) recovers both the correct cells and the correct polynomial function on each piece.
The more general affine-hyperplane formulation \eqref{eq:OrtFormulationHplane} with performs equally well. 


\Cref{fig:linfnorm} shows the results for the $\|\cdot\|_{\infty}$.
The axis-aligned formulation \eqref{eq:OrtFormulation} with depth yields a piecewise polynomial function that performs poorly at approximating the function.
This is reasonable, as the full-dimensional cells are not axis-aligned anymore.
The affine-hyperplane formulation \eqref{eq:OrtFormulationHplane} yields a piecewise polynomial function that matches the cells of the function, as well as the polynomial expression of the function on the cells.

Numerical results for both norms are in \Cref{table:expsappx}. They show a slight increase in error when using the affine-hyperplane formulation on the $\|\cdot\|_1$ norm. 
This can be attributed to the fact that the true partitioning is axis-aligned, which agrees with the main limitation of the axis-aligned formulation. 
And because the axis-aligned formulation is simpler to optimize, we obtain a provably optimal solution. Despite that, the solution of affine-hyperplane formulation has only slightly worse error. Additionally, looking at the errors for the $\|\cdot\|_{\infty}$ norm, we see order(s) of magnitude improvements in the error, when using the affine-hyperplane formulation. This underlines the increased expressivness of the more general formulation.

\begin{figure*}[p]
    \centering
    \begin{subfigure}{.6\textwidth}
        \includegraphics[width=\textwidth]{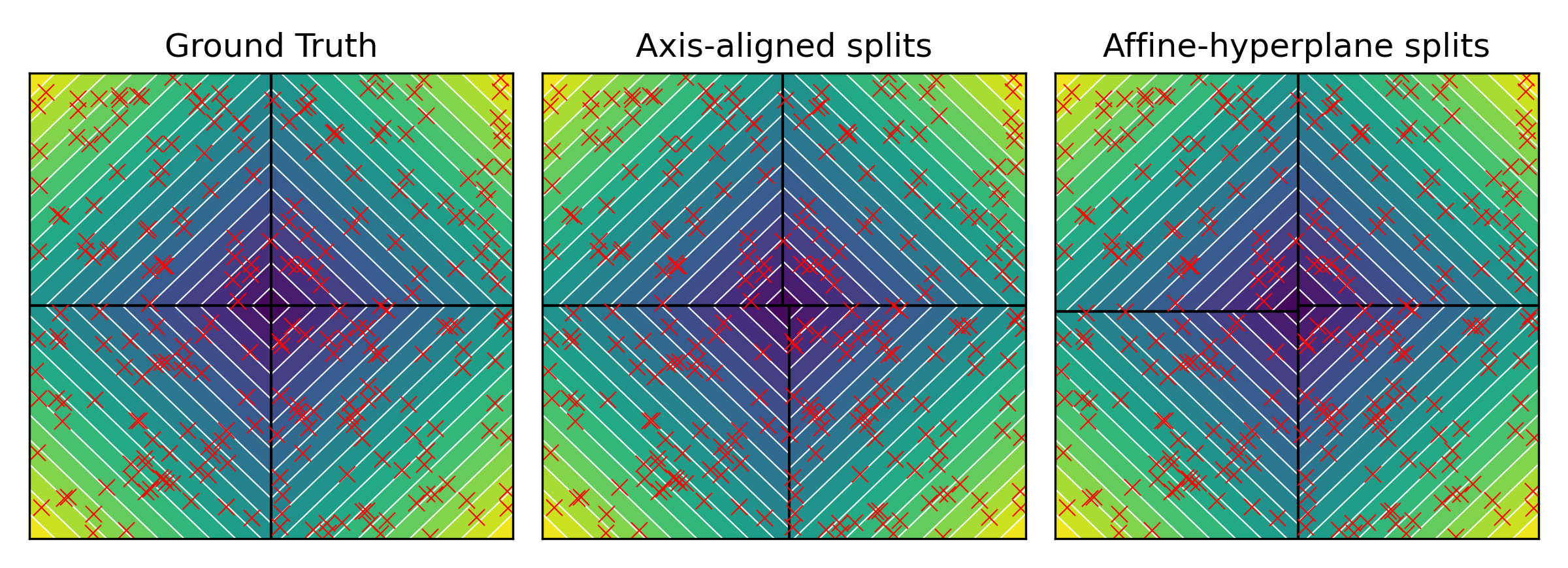}
        \caption{$\|\cdot\|_1$ norm.}
        \label{fig:l1norm}
    \end{subfigure}
    \begin{subfigure}{.6\textwidth}
    
        \includegraphics[width=\textwidth]{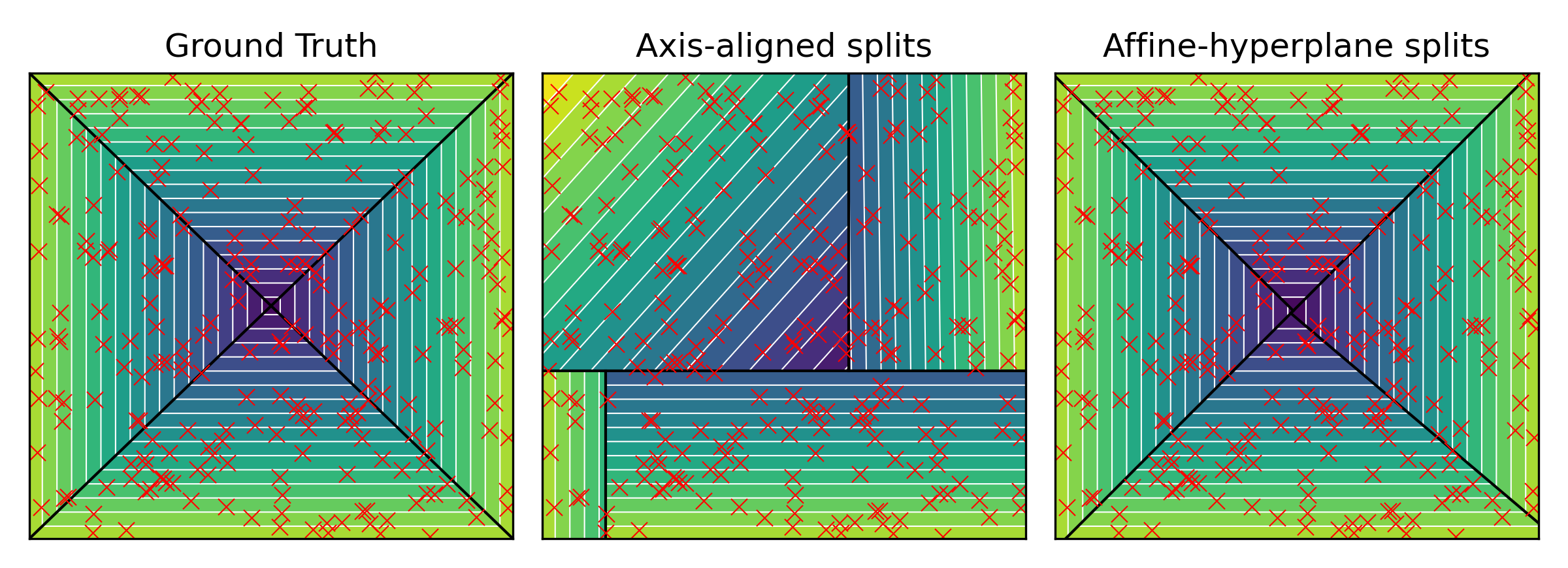}
        \caption{$\|\cdot\|_\infty$ norm.}
        \label{fig:linfnorm}
    \end{subfigure}
    \caption{Results on norm functions. On the left are the original functions, middle column contains the results of the axis-aligned formulation \eqref{eq:OrtFormulation} and on the right are results of the more general affine-hyperplane formulation \eqref{eq:OrtFormulationHplane} from the main body of the paper. Red crosses are the coordinates of samples used in the optimization. White lines are the level lines of the function values. Black lines show the partitioning of the space.}
\end{figure*}

\paragraph{An additional Neural Network approximation}

Much as described in \cref{sec:numer-exper}, we consider a similar NN with $25$ parameters (2 hidden layers with 4 and 2 neurons respectively), but with \emph{only ReLU} activation functions.
The network is trained to minimize the mean squared error with the 2-dimensional function $x\mapsto  2\sin{x_1} + 2\cos{x_2} + x_2 / 2$ taken at $10$ random points from the input space.
All data is processed in a single batch, for 5000 epochs, using \texttt{AdamW} optimizer.

The piecewise linear approximation is obtained from the affine-hyperplane formulation with depth $D=2$ and $\nSample=225=15^{2}$ sample points (shown as red crosses) placed on a regular grid of $15\times 15$ points. The degree of the polynomial pieces is $\polyDeg = 1$. The MIP optimization times out after 5 minutes.

\Cref{fig:nn_relu} shows the landscape of the network in the left pane and the piecewise linear approximation in the right pane.
The obtained piecewise polynomial function essentially recovers the cell decomposition of the network.
\Cref{fig:nn_errors} presents the difference between the NN oputut and the approximation.
The approximation recovers the slope of the network correctly on each of the cells.
The discrepancy between the two functions is caused by the slight mismatch between the cell decomposition of the network and its approximation, which could arguably be improved by taking samples from a denser grid.

Looking at the last row of \Cref{table:expsappx}, we notice that the median error is the lowest among all functions by orders of magnitude, pointing to the high quality of the fit of the polynomials in each partition. This might be due to the properties of taking points on a regular grid which might allow for better approximation.

\begin{table*}[p]
  \caption{Normalized absolute error between the functions and
their approximations by the axis-aligned \eqref{eq:OrtFormulation} and affine-hyperplane trees \eqref{eq:OrtFormulationHplane} of depth 2. The error is computed on a $1000 \times 1000$ grid of regularly
spaced points. We divide the absolute errors by the maximal absolute value of the underlying ground truth to improve comparability.\label{table:expsappx}}
  \centering
  \begin{tabular}{lcccccc}
    \toprule
    \multirow{2}{*}{Test function} & \multicolumn{3}{c}{Axis-aligned tree \eqref{eq:OrtFormulation}} & \multicolumn{3}{c}{Affine-hyperplane tree \eqref{eq:OrtFormulationHplane}} \\
                                   & max. err. & mean err. & median err. & max. err. & mean err. & median err. \\ \midrule
    $\|\cdot\|_{1}$ & \scinum{0.022015681793301364} & \scinum{9.788552931962676e-05} & \scinum{3.0399942109511357e-05} & \scinum{0.02604307023189877} & \scinum{0.00011084159563612121} & \scinum{3.0358761727917738e-05} \\
    $\|\cdot\|_{\infty}$ & \scinum{0.7186999470693821} & \scinum{0.08506899722574107} & \scinum{0.028201825194180685} & \scinum{0.06604959284704932} & \scinum{0.0004217446020628828} & \scinum{5.04670190798251e-05} \\
    ReLU NN & - & - & - & \scinum{0.030146090044253088} & \scinum{0.00017965119982446732} & \scinum{2.686642996129214e-07} \\
    \bottomrule
  \end{tabular}
\end{table*}

\begin{figure}[p]
    \centering
    \begin{subfigure}{.45\textwidth}
        \centering
        \includegraphics[height=0.5\textwidth]{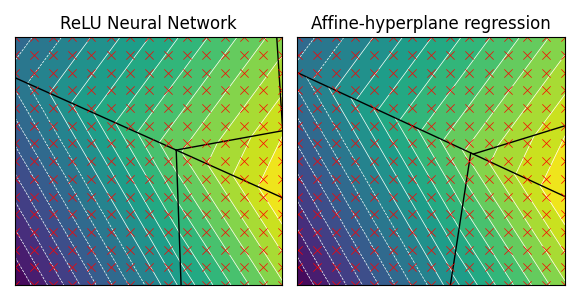}
        \caption{Approximation result. On the left is the NN output landscape and on the right is a resulting tree approximation. The black lines show the partitioning of the space, white lines are level lines, and red crosses show coordinates of sampled points.}
        \label{fig:nn_relu}
    \end{subfigure}
    \hspace{.03\textwidth}
    \begin{subfigure}{.45\textwidth}
        \centering
        \includegraphics[height=0.5\textwidth]{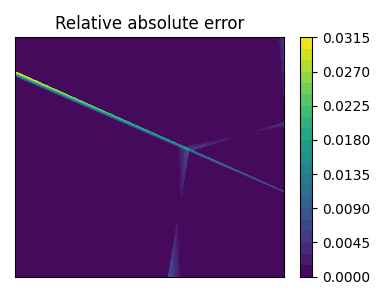}
        \caption{Normalized absolute error of the ReLU NN approximation by the tree model. The errors are divided by the maximum absolute value of the NN output. We can see that the majority of the error is due to slight inaccuracies in the partitioning. These errors are described numerically in \Cref{table:expsappx}.}
        \label{fig:nn_errors}
    \end{subfigure}
    \caption{Approximation of the Neural Network with only ReLU activations. On the left, in \Cref{fig:nn_relu}, is the original output and the approximation. \Cref{fig:nn_errors}, on the right, shows the normalized absolute error between the two functions.}
\end{figure}
\section*{Acknowledgments}
\begingroup
\small
This work has received funding from the European Union’s Horizon Europe research and innovation programme under grant agreement No. 101070568.
This work received funding from the National Centre for Energy II (TN02000025).
\endgroup

\bibliographystyle{plainnat}
\bibliography{refs.bib}



\end{document}